\documentclass[parskip=half,toc=bib]{scrartcl}
\usepackage[utf8]{inputenc}
\usepackage{latexsym,amssymb,yfonts} %
\usepackage{amsmath}
\usepackage{amsthm}
\usepackage{mathtools}
\usepackage{tikz}
\usepackage{enumerate}
\usepackage{psfrag}
\usepackage{graphicx}
\usepackage[colorlinks=true,linkcolor=black,urlcolor=black,citecolor=black,bookmarks]{hyperref}
\usepackage{multirow}
\usepackage{booktabs}
\usepackage[style=alphabetic, sorting=nyt,maxnames=4,isbn=false,url=false,doi=false, backref=true]{biblatex}
\usepackage[all]{xy}
\usepackage{caption}
\usepackage{thmtools}
\def\ResetSectionNames{%
  \def\chapterautorefname{Chapter}
  \def\sectionautorefname{Section}
  \def\subsectionautorefname{Section}
  \def\subsubsectionautorefname{Section}
  \def\figureautorefname{Figure}
  \def\indexname{List of symbols}
}

\AtBeginDocument\ResetSectionNames

\declaretheoremstyle[spaceabove=\topsep,spacebelow=0pt,bodyfont=\normalfont]{scdef}
\declaretheoremstyle[spaceabove=\topsep,spacebelow=0pt,bodyfont=\itshape]{scthm}
\declaretheoremstyle[spaceabove=\topsep,spacebelow=0pt,headfont=\normalfont\itshape,notefont=\normalfont\itshape,notebraces={}{},headformat={\NAME\NOTE},postheadspace=1em,qed=\qedsymbol]{scprf}

\declaretheorem[style=scthm,numberwithin=section,name=Theorem,    refname={Theorem,Theorems},        Refname={Theorem,Theorems}]        {theorem}
\declaretheorem[style=scthm,unnumbered ,name=Theorem,    refname={Theorem,Theorems},        Refname={Theorem,Theorems}]        {theorem*}
\declaretheorem[style=scthm,sharenumber=theorem,     name=Lemma,      refname={Lemma,Lemmas},            Refname={Lemma,Lemmas}]            {lemma}
\declaretheorem[style=scthm,sharenumber=theorem,     name=Corollary,  refname={Corollary,Corollaries},   Refname={Corollary,Corollaries}]   {corollary}
\declaretheorem[style=scthm,sharenumber=theorem,     name=Proposition,refname={Proposition,Propositions},Refname={Proposition,Propositions}]{prop}
\declaretheorem[style=scdef,sharenumber=theorem,     name=Definition, refname={Definition,Definitions},  Refname={Definition,Definitions}]  {definition}
\declaretheorem[style=scdef,sharenumber=theorem,     name=Remark,     refname={Remark,Remarks},          Refname={Remark,Remarks}]          {rem}

\renewbibmacro{in:}{}
\numberwithin{table}{section}
\newcommand{\cyclic}[1]{\stackrel{\scriptsize #1}{\mathfrak{S}}}
\newcommand{\R}{\mathbb{R}}

\renewcommand{\H}{\mathcal{H}}

\newcommand{\End}{\ensuremath{\mathrm{End}}}

\newcommand{\dd}{\mathrm{d}}
\newcommand\extalg{%
  \newlength{\len}%
  \settoheight{\len}{V}%
  \mathbin{%
    \resizebox{0.93\len}{0.93\len}{$\wedge$}%
    \kern-0.1em%
  }}%
\newcommand{\intprod}{\mathbin{\hbox to 0.7ex{%
      \kern-0.3ex
      \vrule height0.0777ex width0.971ex depth0ex
      \kern-0.055ex
      \vrule height1.165ex width0.0777ex depth0ex\hss}}%
}%
\AtEveryBibitem{\clearlist{language}}
\begin{document}

\title{Reducible Holonomy in Closed Torsion Geometries}

\author{Leander Stecker}
\date{}
\maketitle

\begin{abstract}
The purpose of this note is to show that a connection with closed skew-symmetric torsion and reducible holonomy admits a locally defined Riemannian submersion together with a projected geometry on the base. We reframe known submersion results for non-Kähler Bismut Hermite Einstein manifolds and sHKT structures in this context. For homogeneous SKT structures on semi-simple Lie groups we obtain the holonomy decomposition leading to holomorphic submersions over generalized flag manifolds.
\end{abstract}
\medskip

\textbf{Keywords:} Closed Torsion Connection; Holonomy; Riemannian Submersions; SKT manifolds; Homogeneous SKT spaces

\textbf{MSC:} 53B05; 53C29; 53B35; 32M10

\section{Introduction}

Geometries with torsion generalize special holonomy manifolds. Where the latter are often too rigid, adapted connections with torsion can expand the scope while retaining desired properties. In particular, geometries with skew-symmetric and parallel or closed torsion have shown great promises. Parallel torsion is considered more convenient for its geometric properties, see e.g. \cite{AFS, FriG2, FrIv}, yet closed torsion geometries appear canonically in many circumstances, \cite{MGFJS_pluri, FLMM_G2}. Applying tools from parallel torsion to closed torsion has seen efforts to combine both types, \cite{BFG_BTP, ZZ_BTP}. However, this comes with it's own share of restrictions, namely a very narrow class of possible manifolds, see \cite[Theorem 4.1]{AFF}. In this work we instead rediscover a dimensional reduction argument from parallel torsion, see \cite{CMS}, in the closed torsion world.

The fundamental principle goes back to deRham's theorem arguing that a reducible holonomy representation should introduce a split geometry. Indeed, our main theorem gives the following interpretation:  

\begin{theorem}\label{thm1}
Suppose $\nabla$ is a metric connection with closed skew torsion $T$ and $TM=\mathcal{H}\oplus\mathcal{V}$ splits orthogonally as a representation of the holonomy group $\mathrm{Hol}(\nabla)$. Assume further that 
\begin{align*}
T=T^{\mathcal{H}}+T^{m}+T^{\mathcal{V}}\in \Lambda^3\mathcal{H}\oplus\Lambda^2\mathcal{H}\!\wedge\!\mathcal{V}\oplus \Lambda^3\mathcal{V}\subsetneq \Lambda^3TM.
\end{align*}

Then
\begin{enumerate}[a)]
\item there exists a locally defined Riemannian submersion $\pi\colon (M,g)\to (N,g_N)$ with totally geodesic fibers tangent to $\mathcal{V}$,
\item the purely horizontal part of the connection $T^\mathcal{H}$ is projectable, $\pi^*\check{T}\coloneqq T^\mathcal{H}$,
\item and $\check{\nabla}=\nabla^{g_N}+\frac 12 \check{T}$ is a connection with, not necessarily closed, skew torsion on $N$ satisfying
\begin{align*}
\check{\nabla}_XY=\pi_*(\nabla_{\overline{X}}\overline{Y}),
\end{align*}
where $\overline{X},\overline{Y}$ denote the horizontal lifts of $X,Y\in TN$.
\end{enumerate}
\end{theorem}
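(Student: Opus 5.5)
The plan is to read everything off the identity $\nabla=\nabla^g+\tfrac12 T$ and the $\nabla$-parallelity of the splitting $TM=\mathcal{H}\oplus\mathcal{V}$. Throughout, $U,W$ denote vertical and $X,Y,Z$ horizontal vector fields. The hypothesis on $T$ says exactly that the component of $T$ in $\mathcal{H}\wedge\Lambda^2\mathcal{V}$ vanishes, i.e.\ $T(U,W,X)=0$.

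\textbf{Step 1: the vertical distribution.} For vertical $U,W$,
\[
\nabla^g_UW=\nabla_UW-\tfrac12 T(U,W).
\]
Here $\nabla_UW$ is vertical because $\mathcal{V}$ is $\nabla$-parallel, and $T(U,W)$ is vertical because $T(U,W,X)=0$. Hence $\mathcal{V}$ is autoparallel for $\nabla^g$. It is therefore involutive, since $[U,W]=\nabla^g_UW-\nabla^g_WU$, and its leaves are totally geodesic.

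\textbf{Step 2: the metric is bundle-like.} For horizontal $X,Y$,
\[
(\mathcal L_Ug)(X,Y)=g(\nabla^g_XU,Y)+g(\nabla^g_YU,X).
\]
Substituting $\nabla^g_XU=\nabla_XU-\tfrac12T(X,U)$ kills the $\nabla$-terms, since $\nabla_XU$ is vertical. The remaining terms $-\tfrac12\bigl(T(X,U,Y)+T(Y,U,X)\bigr)$ cancel by skew-symmetry. So $g|_{\mathcal H}$ is invariant along the leaves. On a small open set the leaf space $N$ is a manifold, $g_{\mathcal H}$ descends to a metric $g_N$, and $\pi\colon M\to N$ is a Riemannian submersion with totally geodesic fibres. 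This proves a). Note that closedness of $T$ has not been used so far.

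\textbf{Step 3: projectability of $T^{\mathcal H}$ (part b), the main obstacle.} It suffices to show that $U\intprod T^{\mathcal H}=0$, which holds trivially, and that
\[
(\mathcal L_UT^{\mathcal H})(X,Y,Z)=0 .
\]
By Cartan's formula this equals $(U\intprod dT^{\mathcal H})(X,Y,Z)$. I would instead rewrite the Lie derivative via $\nabla$: replace $\mathcal L_U$ by $\nabla_U$ plus correction terms $T^{\mathcal H}(\nabla^g_XU,Y,Z)+\dots$. Using $\nabla^g_XU=\nabla_XU-\tfrac12T^m(X,U)$ and the fact that $T^{\mathcal H}$ vanishes on vertical vectors, this gives
\[
(\mathcal L_UT^{\mathcal H})(X,Y,Z)=(\nabla_UT)(X,Y,Z)+\tfrac12\,\mathfrak S_{X,Y,Z}\,T^{\mathcal H}\bigl(T^m(U,X),Y,Z\bigr),
\]
where $\nabla_U T^{\mathcal H}=(\nabla_UT)^{\mathcal H}$ because $\nabla$ preserves types. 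Then I invoke the standard identity for closed skew torsion:
\[
0=dT(U,X,Y,Z)=\mathfrak S(\nabla_\cdot T)(\cdot,\cdot,\cdot)+2\sigma_T(U,X,Y,Z).
\]
The terms $(\nabla_XT)(U,Y,Z)=(\nabla_XT^m)(U,Y,Z)$ have to be handled via the Bianchi-type relation $\nabla T$ versus $dT$, i.e.\ the well-known formula $\nabla_UT=\tfrac14\,U\intprod dT+\dots$ for closed $T$ (equivalently $\delta$-type identities). Here $\sigma_T$ splits into $T^{\mathcal H}$–$T^m$ cross terms and $T^m$–$T^m$ terms. The cross terms should exactly cancel the correction term above. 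The hard part will be this bookkeeping: showing that the purely $T^m$-contributions and the $\nabla T^m$-contributions vanish after projection to $\Lambda^3\mathcal H$. I would try first to use $\nabla$-parallelity of the splitting to convert $\nabla_XT^m$ terms into expressions with $U$ in a vertical slot, where type reasons kill them.

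\textbf{Step 4: the projected connection (part c).} For basic horizontal lifts $\overline X,\overline Y$, O'Neill's formulas give
\[
\pi_*(\nabla^g_{\overline X}\overline Y)=\nabla^{g_N}_XY,
\]
while the vertical part $\tfrac12[\overline X,\overline Y]^{\mathcal V}$ is killed by $\pi_*$. Moreover
\[
\pi_*\bigl(\tfrac12T(\overline X,\overline Y)\bigr)=\tfrac12\,\check T(X,Y),
\]
since $T^m(\overline X,\overline Y)$ is vertical. Adding the two yields
\[
\pi_*(\nabla_{\overline X}\overline Y)=\check\nabla_XY .
\]
The torsion $\check T$ is skew by construction, but $d\check T$ need not vanish, because $dT=0$ only constrains $dT^{\mathcal H}+dT^m+dT^{\mathcal V}$.
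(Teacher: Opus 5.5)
Your Steps 1, 2 and 4 are correct and agree with the paper's argument. Step 3, however, is the entire content of b), and you do not carry it out; the route you sketch also cannot close. Expanding $dT(U,X,Y,Z)=0$ through $\nabla$ produces the terms $(\nabla_XT)(Y,Z,U)=(\nabla_XT^m)(Y,Z,U)$ and their cyclic permutations. These are \emph{not} killed by type considerations: $\nabla_XT^m$ lies in $\Lambda^2\mathcal{H}\wedge\mathcal{V}$, and you are evaluating exactly that component, with $U$ already in the single vertical slot. Individually these terms are generally nonzero. Moreover, $\nabla T$ is not determined algebraically by $T$ and $dT$, so there is no formula of the type $\nabla_UT=\tfrac14 U\intprod dT+\dots$ to appeal to.

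The missing idea is curvature. Since $R(X,Y)$ lies in the holonomy algebra, which preserves $\mathcal{H}$, one has $R(X,Y,Z,U)=g(R(X,Y)Z,U)=0$ for $X,Y,Z\in\mathcal{H}$ and $U\in\mathcal{V}$. Feed this and $dT=0$ into the first Bianchi identity for skew torsion,
\[
\mathfrak{S}_{X,Y,Z}R(X,Y,Z,U)=dT(X,Y,Z,U)-\sigma_T(X,Y,Z,U)+(\nabla_UT)(X,Y,Z).
\]
This identity is just $\mathfrak{S}R(X,Y)Z=\mathfrak{S}\{(\nabla_XT)(Y,Z)+T(T(X,Y),Z)\}$ combined with the expansion of $dT$ you wrote down. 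It gives directly $(\nabla_UT)(X,Y,Z)=\sigma_T(X,Y,Z,U)$. Equivalently, it controls precisely the cyclic sum $\mathfrak{S}_{X,Y,Z}(\nabla_XT)(Y,Z,U)=-\sigma_T(X,Y,Z,U)$ that you could not handle. This is how the paper proceeds.

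To finish, note that $T(U,X)\in\mathcal{H}$ because the $\Lambda^2\mathcal{V}\wedge\mathcal{H}$-part of $T$ vanishes. Hence
\[
\sigma_T(X,Y,Z,U)=-\mathfrak{S}_{X,Y,Z}T^{\mathcal{H}}(T(U,X),Y,Z),
\]
so only $T^{\mathcal H}$--$T^m$ cross terms occur; contrary to your expectation, there are no $T^m$--$T^m$ terms when one entry is vertical. This cancels the correction term in $\mathcal{L}_UT^{\mathcal H}$, but only if that term has the right coefficient, and yours does not. In your derivation you replaced $\nabla^g_UT^{\mathcal H}$ by $\nabla_UT^{\mathcal H}$ without the conversion term, which supplies a second $\tfrac12$. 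Writing $[U,X]=\nabla_UX-\nabla_XU-T(U,X)$ directly gives
\[
(\mathcal{L}_UT^{\mathcal H})(X,Y,Z)=(\nabla_UT)(X,Y,Z)+\mathfrak{S}_{X,Y,Z}T^{\mathcal H}(T(U,X),Y,Z),
\]
with coefficient $1$. This agrees with Cartan's formula and the torsion version of the exterior derivative. With your factor $\tfrac12$, a residual $-\tfrac12\mathfrak{S}_{X,Y,Z}T^{\mathcal H}(T(U,X),Y,Z)$ would survive and projectability would not follow.
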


While we only obtain a locally defined Riemannian submersion instead of a full product geometry, it allows for more intricate relations between the total, base and fiber spaces. In the parallel case this gave rise to classification results, \cite{MorSch, ADS21}, and new solutions to geometric systems of equations, \cite{ADS23, G2heterotic}.


Our focus here will be employing the submersion theorem to SKT manifolds. These manifolds are a class of non Kähler hermitian manifolds that rose to prominence due to their connection with the generalized Ricci and pluriclosed flow, \cite{MGF}, and being one of three special non-Kähler hermitian geometries in the Fino-Vezzoni conjecture, \cite{FiVe,OVFinoVezConj, FGsurvey}. Prime examples of compact non-Kähler homogeneous SKT manifolds are Bismut-flat structures on compact semi-simple Lie groups of even dimension with their bi-invariant metrics. Recent results in \cite{LauMon, FGSKT, Pham} have shown that there is a vivid collection of left-invariant metrics generalizing those bi-invariant examples. Our second main \autoref{SamelsonHolonomy} computes the decomposition of their Bismut-holonomy. To our knowledge these are the first non-trivial examples of SKT manifolds where the Bismut holonomy decomposition is fully determined. We find that

\begin{align*}
\mathfrak{g}=\mathfrak{t}\oplus\bigoplus_{\alpha\in \Delta_{I_{\mathrm{max}}}}\mathfrak{g}_{\alpha}^\mathbb{R}\oplus \left(\sum_{\alpha\in\Delta^+\setminus\Delta_{I_{\mathrm{max}}}}\mathfrak{g}_\alpha^\mathbb{R}\right)
\end{align*}

for a span of simple roots $\Delta_{I_{\mathrm{max}}}$ directly determined by the metric. Therefore, we obtain Hermitian submersions $G\to G^\mathbb{C}/P$ for any generalized flag $G^\mathbb{C}/P$ as special examples of \autoref{thm1}.

The paper is organized as follows. \Autoref{Chp2} is devoted to the main \autoref{submthm}. We recall the necessary preliminaries on connections with closed skew torsion in order to prove the submersion theorem. In \autoref{holthm} we show under which cercumstances the holonomy projects to the base. Our theorem also covers the parallel torsion case where this question has show to be delicate in some situations. We investigate the situation for SKT manifolds in \autoref{Chp3} reframing in \autoref{thmBHE} and \autoref{thmsHKT} recently proved submersion theorems for BHE and sHKT manifolds as special cases of our main theorem. The final chapter is all about homogeneous SKT structures on semisimple Lie groups with Samelson complex structures. We give a full decomposition of the Bismut holonomy in \autoref{SamelsonHolonomy}, proving that they admit holonomy induced hermitian submersions to all generalized flag manifolds.

\textit{Acknowledgements:} I like to thank many people that showed their interest in this project. In particular, I am grateful to Oliver Goertsches and Anna Fino for their help on various parts of this work.

\section{Submersion Theorem}\label{Chp2}

\begin{definition}
A connection $\nabla$ is said to have skew-torsion if the torsion 
\begin{align*}
T(X,Y,Z)\coloneqq g(\nabla_XY-\nabla_YX-[X,Y],Z)\in \Lambda^3T^*M.
\end{align*}
In this case $\nabla=\nabla^g+\frac 12 T$.
%
\end{definition}

Here and in the following we will always refer to the Levi-Civita connection of $(M,g)$ and its associated objects by a superscript $g$.

One interpretation of torsion is a failure of $\nabla$ to compute the exterior derivative. One gets the amended formular for a $k$-form $\omega$
\begin{align}\begin{split}\label{exterior}
\mathrm{d}\omega(X_0,\dots,X_k)&=\sum_{i=0}^k(-1)^i(\nabla_{X_i}\omega)(X_0,\dots,\hat{X_i},\dots,X_k)\\
&\qquad-\sum_{i<j}(-1)^{i+j}\omega(T(X_i,X_j),X_0,\dots,\hat{X_i},\dots,\hat{X_j},\dots,X_k)
\end{split}\end{align}

For a connection $\nabla$ with skew-torsion we consider it's curvature
\begin{align*}
R^\nabla(X,Y,Z,V)\coloneqq g(R^\nabla(X,Y)Z,V)\coloneqq g(([\nabla_X,\nabla_Y]-\nabla_{[X,Y]})Z,V).
\end{align*}
Note that we cannot assume the usual symmetries from the Levi-Civita curvature. However, we obtain the following Bianchi identity
\begin{align}\label{Bianchi}
\cyclic{X,Y,Z}\!\!R(X,Y,Z,V)=\mathrm{d}T(X,Y,Z,V)-\sigma_T(X,Y,Z,V)+\nabla_VT(X,Y,Z),
\end{align}
where on the left hand side $\cyclic{X,Y,Z}$ is short for a sum over all cyclic permutations of $X,Y,Z$. On the right hand side we denote by $\sigma_T$ the $4$-form
\begin{align*}
\sigma_T(X,Y,Z,V)=\cyclic{X,Y,Z}g(T(X,Y)^{\#},T(Z,V)^{\#})\in \Lambda^4T^*M.
\end{align*}

\begin{definition}
We call $(M,g,\nabla)$ a closed torsion geometry of  $\nabla$ has skew-torsion $T$ satisfying $\mathrm{d}T=0$.
\end{definition}

We can immediately state our main theorem:

\begin{theorem}\label{submthm}
Suppose $(M,g,\nabla)$ is a closed torsion geometry with torsion $T$ and $TM=\mathcal{H}\oplus\mathcal{V}$ splits orthogonally as a representation of the holonomy group $\mathrm{Hol}(\nabla)$. Assume further that 
\begin{align}\label{projecttau}
T=T^{\mathcal{H}}+T^{m}+T^{\mathcal{V}}\in \Lambda^3\mathcal{H}\oplus\Lambda^2\mathcal{H}\!\wedge\!\mathcal{V}\oplus \Lambda^3\mathcal{V}\subsetneq \Lambda^3TM.
\end{align}
Then
\begin{enumerate}[a)]
\item there exists a locally defined Riemannian submersion $\pi\colon (M,g)\to (N,g_N)$ with totally geodesic fibers tangent to $\mathcal{V}$,
\item the purely horizontal part of the connection $T^\mathcal{H}$ is projectable, $\pi^*\check{T}\coloneqq T^\mathcal{H}$,
\item and $\check{\nabla}=\nabla^{g_N}+\frac 12 \check{T}$ is a connection with, not necessarily closed, skew torsion on $N$ satisfying
\begin{align}\label{pinabla}
\check{\nabla}_XY=\pi_*(\nabla_{\overline{X}}\overline{Y}),
\end{align}
where $\overline{X},\overline{Y}$ denote the horizontal lifts of $X,Y\in TN$.
\end{enumerate}
\end{theorem}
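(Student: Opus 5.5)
Since $\mathcal{H}$ and $\mathcal{V}$ are $\nabla$-parallel, I will first show that $\mathcal{V}$ is integrable with totally geodesic leaves, and that the O'Neill tensor $T^{\mathcal{V}}$-part vanishes. For $U,W\in\Gamma(\mathcal{V})$ we have $\nabla_UW\in\mathcal{V}$, so
\[
[U,W]=\nabla_UW-\nabla_WU-T(U,W).
\]
Because $T$ has no component in $\Lambda^2\mathcal{V}\wedge\mathcal{H}$ by \eqref{projecttau}, $T(U,W)\in\mathcal{V}$, and hence $\mathcal{V}$ is involutive. By Frobenius it is locally the vertical distribution of a submersion $\pi$ onto a local leaf space $N$.

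Next, $\nabla^g_UW=\nabla_UW-\frac12T(U,W)\in\mathcal{V}$, so the second fundamental form of the fibers vanishes and the fibers are totally geodesic. To obtain a Riemannian submersion I will show $\mathcal{L}_Ug|_{\mathcal{H}}=0$ for $U$ vertical. For horizontal $X,Y$,
\[
(\mathcal{L}_Ug)(X,Y)=g(\nabla^g_XU,Y)+g(\nabla^g_YU,X).
\]
Using $\nabla^g_XU=\nabla_XU-\frac12T(X,U)$ with $\nabla_XU\in\mathcal{V}$, this equals $-\frac12\bigl(T(X,U,Y)+T(Y,U,X)\bigr)=0$ by skew-symmetry. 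So $g|_{\mathcal{H}}$ is constant along fibers and descends to a metric $g_N$. This proves a).

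For b), I need $T^{\mathcal{H}}$ to be basic: it is horizontal by construction, and it remains to show $\mathcal{L}_UT^{\mathcal{H}}=0$, equivalently $U\intprod \mathrm{d}T^{\mathcal{H}}=0$ on horizontal arguments together with fiber-invariance. This is where closedness enters, and I expect it to be the main obstacle. The plan is to evaluate $\mathrm{d}T(U,X,Y,Z)=0$ for $U$ vertical and $X,Y,Z$ horizontal using formula \eqref{exterior}. The term $(\nabla_UT)(X,Y,Z)$ equals $(\nabla_UT^{\mathcal{H}})(X,Y,Z)$, since $\nabla$ preserves the type decomposition. The remaining terms are of the form $(\nabla_XT)(U,Y,Z)=(\nabla_XT^m)(U,Y,Z)$ together with quadratic torsion terms $T(T(U,X),Y,Z)$ and $T(T(X,Y),U,Z)$, which involve $T^m$ and $T^{\mathcal{H}}$. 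I would compare this with the Lie derivative
\[
(\mathcal{L}_UT^{\mathcal{H}})(X,Y,Z)=(\nabla^g_UT^{\mathcal{H}})(X,Y,Z)+\cyclic{X,Y,Z}T^{\mathcal{H}}(\nabla^g_XU,Y,Z)
\]
and rewrite $\nabla^g$ via $\nabla$ and $T$. The hope is that all $T^m$-terms cancel against those in $\mathrm{d}T(U,X,Y,Z)$, which would give $\mathcal{L}_UT^{\mathcal{H}}=\mathrm{d}T(U,X,Y,Z)=0$. If this direct comparison is messy, I will instead check it in the frame of basic horizontal fields $X,Y,Z$, for which $[U,X]$ is vertical: then $U(T^{\mathcal{H}}(X,Y,Z))$ is computed directly and matched against $\mathrm{d}T=0$.

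For c), for basic lifts $\overline{X},\overline{Y}$, O'Neill's formula gives $\pi_*(\nabla^g_{\overline{X}}\overline{Y})=\nabla^{g_N}_XY$. Since
\[
\nabla_{\overline{X}}\overline{Y}=\nabla^g_{\overline{X}}\overline{Y}+\frac12T(\overline{X},\overline{Y}),
\]
and the horizontal part of $T(\overline{X},\overline{Y})$ is $T^{\mathcal{H}}(\overline{X},\overline{Y})=\overline{\check T(X,Y)}$ by b), we obtain \eqref{pinabla}. This yields $\check\nabla=\nabla^{g_N}+\frac12\check T$, whose torsion $\check T$ is skew but in general not closed.
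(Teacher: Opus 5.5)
Your arguments for a) and c) are correct and match the paper: integrability and total geodesy come from $\nabla^g_UW=\nabla_UW-\frac12T(U,W)\in\mathcal{V}$, the computation of $\mathcal{L}_Ug$ on $\mathcal{H}$ gives the Riemannian submersion, and c) follows from O'Neill's formula together with $\pi_*T(\overline{X},\overline{Y})=\check{T}(X,Y)$. The gap is in b), exactly at the step you flag as the obstacle. The cancellation you hope for does not happen formally. Rewriting $\nabla^g$ through $\nabla$ gives
\[
(\mathcal{L}_UT^{\mathcal{H}})(X,Y,Z)=(\nabla_UT^{\mathcal{H}})(X,Y,Z)+\cyclic{X,Y,Z}T^{\mathcal{H}}(T(U,X),Y,Z)=\mathrm{d}T^{\mathcal{H}}(U,X,Y,Z).
\]
By type considerations, $\mathrm{d}T(U,X,Y,Z)=\mathrm{d}T^{\mathcal{H}}(U,X,Y,Z)+\mathrm{d}T^m(U,X,Y,Z)$, where
\[
\mathrm{d}T^m(U,X,Y,Z)=-\cyclic{X,Y,Z}(\nabla_XT^m)(U,Y,Z)+\cyclic{X,Y,Z}T^m(T^{\mathcal{H}}(X,Y),U,Z).
\]
The horizontal derivatives $\nabla_XT^m$ have no counterpart in $\mathcal{L}_UT^{\mathcal{H}}$. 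So closedness only yields $\mathcal{L}_UT^{\mathcal{H}}=-\mathrm{d}T^m(U,\cdot,\cdot,\cdot)$ on $\mathcal{H}$. Your fallback with basic fields reproduces exactly the same residual, now written with brackets. Showing that this residual vanishes is the real content of b). Your plan contains no ingredient that could do it: it has neither a curvature identity nor any other second-order identity.

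The paper supplies the missing input through the Bianchi identity \eqref{Bianchi}. Since $R(X,Y)$ preserves $\mathcal{H}$, we have $R(X,Y,Z,U)=0$ for $X,Y,Z\in\mathcal{H}$. With $\mathrm{d}T=0$ this gives $(\nabla_UT)(X,Y,Z)=\sigma_T(X,Y,Z,U)=-\cyclic{X,Y,Z}T^{\mathcal{H}}(T(U,X),Y,Z)$, using $T(U,X)\in\mathcal{H}$. This is precisely $\mathcal{L}_UT^{\mathcal{H}}(X,Y,Z)=0$.

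Alternatively, your own route can be completed by proving $\mathrm{d}T^m(U,X,Y,Z)=0$ directly. Since $\nabla_XY\in\mathcal{H}$, one has $T^m(X,Y,U)=-g([X,Y],U)$. Expand $\mathrm{d}T^m(U,X,Y,Z)$ with Lie brackets and apply the Jacobi identity to $\cyclic{X,Y,Z}[X,[Y,Z]]$. The expression then reduces to $\cyclic{X,Y,Z}(g(\nabla^g_{W}X,U)+g(W,\nabla^g_UX))$ with $W=[Y,Z]^{\mathcal{V}}$, and both terms vanish because the fibres are totally geodesic. Either way, an identity of this kind has to be added before b), and hence c), is proved.
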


\begin{proof}
We note that by \eqref{projecttau} and the invariance of $\mathcal{V}$ under $\nabla$ for any vertical vector fields $V,W\in \mathcal{V}$ we have
\begin{align*}
\nabla^g_VW=\nabla_VW-\frac 12T(V,W)\in\mathcal{V}.
\end{align*}
Therefore, the distribution $\mathcal{V}$ is integrable and a curve is geodesic on the integral submanifold tangent to $\mathcal{V}$ if and only if it is a geodesic in $M$. The integral submanifolds give rise to a foliation and hence to a submersion $\pi$ from a small neighborhood $U\subset M$ to a local transverse section $S$. We show that the metric restricted to $\mathcal{H}\times\mathcal{H}$ is constant along vertical vector fields and therefore projectable. For $X,Y\in\H$ we have
\begin{align*}
(L_Vg)(X,Y)&=V(g(X,Y))-g([V,X],Y)-g(X,[V,Y])\\
&=g(\nabla^g_XV,Y)+g(X,\nabla^g_YX)\\
&=g(\nabla_XV,Y)+g(X,\nabla_YV)=0
\end{align*}
since $\mathcal{V}$ is preserved by $\nabla$. This proves that $\pi$ is a Riemannian submersion.

To prove the second assertion we denote $T^\mathcal{H}=\mathrm{pr}_{\Lambda^3\mathcal{H}}T$. If we show that $T^\mathcal{H}$ is constant along the fibers it projects to a well-defined $3$-form $\check{T}$. Let $V\in \mathcal{V}$. Then we have by \eqref{exterior}
\begin{align}\label{Torsionintorsion}
L_VT^\mathcal{H}(X,Y,Z)&=\mathrm{d}T^\mathcal{H}(V,X,Y,Z)\\&=(\nabla^T_VT^\mathcal{H})(X,Y,Z)+\cyclic{X,Y,Z}T^\mathcal{H}(T(V,X),Y,Z).
\end{align}
Whenever either $X,Y,Z\in \mathcal{V}$ this vanishes. Indeed, by \eqref{projecttau} we have that $T(V,Z)\in\mathcal{V}$ if $Z\in \mathcal{V}$ and, hence, $T(V,Z)\intprod T^\mathcal{H}=0$. Now consider $X,Y,Z\in\mathcal{H}$. Since $\nabla$ preserves $\mathcal{V}$ and $\mathcal{H}$ the curvature $R^{\nabla^T}(X,Y,Z,V)=0$ and the Bianchi identity for connections with skew torsion, \eqref{Bianchi}, implies
\begin{align*}
0&=\cyclic{X,Y,Z} R^{\nabla}(X,Y,Z,V)=dT(X,Y,Z,V)-\sigma_T(X,Y,Z,V)+(\nabla_VT)(X,Y,Z)\\
&=\cyclic{X,Y,Z}g(T(V,X),T(Y,Z))+(\nabla_VT)(X,Y,Z)\\
&=\cyclic{X,Y,Z}T^\mathcal{H}(T(V,X),Y,Z)+(\nabla_VT)(X,Y,Z)=L_VT^\mathcal{H}(X,Y,Z),
\end{align*}
where we used again that $T(V,Z)\in \mathcal{H}$ and the last step is \eqref{Torsionintorsion}.

Equation \eqref{pinabla} follows directly from $\nabla^{g_N}_{X}{Y}=\pi_*(\nabla^{g}_{\overline{X}}\overline{Y})$ for Riemannian submersions. 
\end{proof}

\begin{rem}\label{RemIntegrable}
The condition \eqref{projecttau} is equivalent to $\mathcal{V}$ being integrable. Indeed, for $V,W\in\mathcal{V}$ and $X\in \mathcal{H}$
\begin{align*}
g([V,W],X)=g(\nabla_VW,X)-g(\nabla_WV,X)-T(V,W,X)
\end{align*}
and, hence $[\mathcal{V},\mathcal{V}]\subset\mathcal{V}$ if and only if $T$ vanishes on $\Lambda^2\mathcal{V}\otimes \mathcal{H}$.
\end{rem}

\begin{corollary}
In the situation of \autoref{submthm} the base space $(N, g_N,\check{\nabla})$ is again a closed torsion geometry if and only the torsion $\check{T}$ satisfies
\begin{align}\label{dcT}
\mathrm{d}\check{T}(X_1,X_2,X_3,X_4)=-2\sigma_{T^m}(\overline{X_1},\overline{X_2},\overline{X_3},\overline{X_4})=0.
\end{align}
\end{corollary}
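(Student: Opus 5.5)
Since $\pi^*\check{T}=T^\mathcal{H}$ and $\pi^*$ commutes with $\mathrm{d}$, the plan is to compute $\mathrm{d}T^\mathcal{H}$ on four horizontal lifts $\overline{X_1},\dots,\overline{X_4}$ and compare it with $\mathrm{d}T=0$. Write $T=T^\mathcal{H}+T^m+T^\mathcal{V}$ and evaluate $\mathrm{d}T$ on four horizontal vectors. This gives
\begin{align*}
0=\mathrm{d}T^\mathcal{H}(\overline{X_1},\dots,\overline{X_4})+\mathrm{d}T^m(\overline{X_1},\dots,\overline{X_4})+\mathrm{d}T^\mathcal{V}(\overline{X_1},\dots,\overline{X_4}).
\end{align*}
The first term is $\mathrm{d}\check{T}(X_1,\dots,X_4)$ pulled back. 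It remains to express the other two terms through $\sigma_{T^m}$.

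For each of these terms I would use formula \eqref{exterior} with the connection $\nabla$. The derivative part is $(\nabla_{\overline{X_i}}T^m)(\dots)$ and $(\nabla_{\overline{X_i}}T^\mathcal{V})(\dots)$. Because $\nabla$ preserves $\mathcal{H}$ and $\mathcal{V}$, it preserves the type decomposition of $\Lambda^3TM$. Hence $\nabla T^m$ is still of type $\Lambda^2\mathcal{H}\wedge\mathcal{V}$ and $\nabla T^\mathcal{V}$ is of type $\Lambda^3\mathcal{V}$. Both vanish on three horizontal arguments, so the derivative parts drop out.

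The torsion part for $T^\mathcal{V}$ is $T^\mathcal{V}(T(X_i,X_j),X_k,X_l)$. It vanishes because $X_k$ is horizontal. The torsion part for $T^m$ is
\begin{align*}
-\sum_{i<j}(-1)^{i+j}T^m(T(X_i,X_j),X_k,X_l).
\end{align*}
Here $T^m(\cdot,X_k,X_l)$ is nonzero only on the vertical component of $T(X_i,X_j)$, and that component is exactly $T^m(X_i,X_j)^\#$. So each summand equals $g(T^m(X_i,X_j),T^m(X_k,X_l))$. Summing over the six pairs $i<j$ with the signs, the terms coming from complementary pairs coincide. One is left with $2\cyclic{X_2,X_3,X_4}g(T^m(X_1,X_2),T^m(X_3,X_4))$, up to an overall sign, which is $2\sigma_{T^m}(\overline{X_1},\dots,\overline{X_4})$.

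Hence $\mathrm{d}\check{T}=-2\sigma_{T^m}$ on horizontal lifts, and $\check{T}$ is closed if and only if this $4$-form vanishes. The main obstacle is the bookkeeping of signs and combinatorial factors in the torsion part of \eqref{exterior}. One has to confirm the factor $2$ and the sign $-2$, given the paper's convention that $\sigma_T$ is a single cyclic sum. I would check this directly on a basis with pairs $(12|34)$, $(13|24)$, $(14|23)$.
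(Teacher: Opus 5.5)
Your proposal is correct and follows essentially the same route as the paper. The paper writes $\mathrm{d}T^\mathcal{H}=-\mathrm{d}(T^m+T^\mathcal{V})$ on horizontal lifts, expands via \eqref{exterior}, kills the derivative terms because $\nabla$ preserves the type decomposition, and collects the torsion terms into $-2\sigma_{T^m}$; the deferred sign check works out, since the six pairs give $\mathrm{d}T^m(\overline{X_1},\dots,\overline{X_4})=2\sigma_{T^m}(\overline{X_1},\dots,\overline{X_4})$ while $\mathrm{d}T^\mathcal{V}$ vanishes there, so indeed $\mathrm{d}\check{T}=-2\sigma_{T^m}$.
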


\begin{proof}
Denote $T^\perp=T^m+T^\mathcal{V}\in \Lambda^2\mathcal{H}\!\wedge\!\mathcal{V}\oplus\Lambda^3\mathcal{V}$ then
\begin{align*}
\mathrm{d}\check{T}(X_1,X_2,X_3,X_4)&=\mathrm{d}T^\mathcal{H}(\overline{X_1},\overline{X_2},\overline{X_3},\overline{X_4})=-\dd T^\perp(\overline{X_1},\overline{X_2},\overline{X_3},\overline{X_4})\\
&=(\nabla_{\overline{X_4}}^TT^\perp)(\overline{X_1},\overline{X_2},\overline{X_3})-\!\!\cyclic{X_1,X_2,X_3}\!\!(\nabla^T_{\overline{X_1}}T^\perp)(\overline{X_2},\overline{X_3},\overline{X_4})\\
&\qquad -\!\!\cyclic{X_1,X_2,X_3}\!\!T^\perp(T(\overline{X_1},\overline{X_2}),\overline{X_3},\overline{X_4})\\
&\qquad-\!\!\cyclic{X_1,X_2,X_3}\!\!T^\perp(\overline{X_1},\overline{X_2},T(\overline{X_3},\overline{X_4}))\\
&=-2\!\!\cyclic{X_1,X_2,X_3}\!\!g(T^m(\overline{X_1},\overline{X_2}),T^m(\overline{X_3},\overline{X_4}))
\end{align*}
since $\nabla$ preserves the type $\Lambda^2\mathcal{H}\wedge \mathcal{V}\oplus\Lambda^3\mathcal{V}$ of $T^\perp$.
\end{proof}

\begin{corollary}
Let $\nabla$ be a connection with closed torsion $T$ and $TM=V_1\oplus V_2$ as orthogonal holonomy-invariant decomposition. Then locally
\begin{align*}
(M,g,\nabla)=(M_1,g_1,\nabla^{T_1})\times (M_2,g_2,\nabla^{T_2})
\end{align*}
into closed torsion geometries if and only if the torsion is decomposable, i.e. $T=T_1+T_2$ with $T_i\in\Lambda^3V_i$.
\end{corollary}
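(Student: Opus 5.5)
The plan is to prove the two implications separately. The \emph{only if} direction is immediate. In a local product $(M_1,g_1,\nabla^{T_1})\times(M_2,g_2,\nabla^{T_2})$ the Levi-Civita connection is the product connection and $\nabla=\nabla^g+\frac12(T_1+T_2)$. Each $T_i$ is pulled back from the factor $M_i$, so $T_i\in\Lambda^3V_i$ and the torsion is decomposable.

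For the \emph{if} direction, assume $T=T_1+T_2$ with $T_i\in\Lambda^3V_i$. This is the special case of \eqref{projecttau} with $T^m=0$, for either choice of $\mathcal{V}$. I will apply \autoref{submthm} twice, once with $\mathcal{V}=V_2,\mathcal{H}=V_1$ and once with the roles swapped. By \autoref{RemIntegrable} both distributions are integrable. The first step of the proof of \autoref{submthm} shows that $\nabla^g_VW\in\mathcal{V}$ for $V,W\in\mathcal{V}$, so both foliations are totally geodesic. Together with the Lie derivative computation there, this shows that $g$ restricted to $V_1$ is invariant along $V_2$, and symmetrically. Two complementary orthogonal integrable distributions, each totally geodesic (equivalently, each parallel for $\nabla^g$), give a local Riemannian product $(M,g)=(M_1,g_1)\times(M_2,g_2)$ by the classical de Rham splitting argument. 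Hence both submersions of \autoref{submthm}\,a) are just the factor projections.

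Next I apply part b) in both directions. It gives $T_1=T^{V_1}=\pi_1^*\check T_1$ and $T_2=\pi_2^*\check T_2$, so each $T_i$ is the pullback of a $3$-form on the factor $M_i$. Since $T^m=0$, the preceding corollary gives $\mathrm d\check T_i=-2\sigma_{T^m}=0$, so each factor is a closed torsion geometry. One can also see this directly: $0=\mathrm dT=\pi_1^*\mathrm d\check T_1+\pi_2^*\mathrm d\check T_2$, and the two summands lie in the independent types $\Lambda^4V_1$ and $\Lambda^4V_2$, so each vanishes.

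Finally I identify the connection. On the product, $\nabla^g=\nabla^{g_1}\times\nabla^{g_2}$. Moreover $\frac12 T(X,\cdot)$ for $X\in V_1$ acts only on $V_1$ through $\check T_1$, and similarly for $V_2$. Therefore $\nabla=\nabla^g+\frac12(T_1+T_2)$ equals the product of the connections $\nabla^{g_i}+\frac12\check T_i=\nabla^{T_i}$, consistent with \eqref{pinabla}.

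The main obstacle is the de Rham-type step: promoting the two submersions to an honest local isometric product. I expect to handle it by checking that the second fundamental form and the integrability tensor (O'Neill's $A$) of each foliation vanish. Integrability and total geodesy of $V_2$ kill $T$-tensor terms, while integrability of $V_1$ (by \autoref{RemIntegrable} applied with the roles reversed) kills $A$.
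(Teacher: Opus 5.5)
Your proposal is correct and follows essentially the same route as the paper: apply \autoref{submthm} with $\mathcal{V}=V_1$ and with $\mathcal{V}=V_2$, use that both distributions are integrable to get the two local projections, invoke \eqref{dcT} with $T^m=0$ to see that both factors are closed torsion geometries, and treat the converse as immediate. You add details the paper leaves implicit: the de Rham/O'Neill argument that the two totally geodesic, mutually orthogonal foliations give a genuine local isometric product, and the identification of $\nabla$ with the product connection. The underlying argument is the same.
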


\begin{proof}
If the torsion is decomposable, \eqref{projecttau} is satisfied for $\mathcal{V}=V_1$, $\mathcal{H}=V_2$ and vice versa. Hence, both distributions are integrable and we obtain locally defined Riemannian projection maps to their respective integral submanifolds. As the mixed torsion parts vanish we have \eqref{dcT} satisfied for both $T_1$ and $T_2$. The converse is clear.
\end{proof}

Exploiting the splitting $TM=\mathcal{V}\oplus\mathcal{H}$ and \eqref{pinabla} we collect some remarkable identities for the curvature $R$ of $\nabla$.

\begin{prop}
Let $\pi\colon M\to N$ be a Riemannian submersion, $\nabla$ and $\check{\nabla}$ connections on $M$ and $N$ as above.
\begin{enumerate}[a)]
\item Then $g(\nabla_XY,Z)=T(X,Y,Z)$ for any vector $X\in\mathcal{V}$, $Z\in\mathcal{H}$ and basic vector field $Y\in \Gamma\mathcal{H}$. In particular, both sides are tensorial.
\item The curvatures $R$ of $\nabla$ and $\check{R}$ of $\check{\nabla}$ are related by
\begin{align*}
R(X,Y,Z,V)=\check{R}(\pi_*X,\pi_*Y,\pi_*Z,\pi_*V)-g(T^m(X,Y),T^m(Z,V)),
\end{align*}
for any $X,Y,Z,V\in \mathcal{H}$.
\item For any $X,Y\in \mathcal{H}$ and $Z,V\in\mathcal{V}$
\begin{align*}
R(X,Y,Z,V)=0.
\end{align*}
\end{enumerate}
\end{prop}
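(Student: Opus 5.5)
The plan is to treat the three items separately. Throughout, $X,Y,Z,V$ (or $W$ for vertical fields) are chosen basic or vertical as convenient; all identities are tensorial, so this loses no generality. Two facts are used repeatedly. First, for basic $X$ and vertical $W$ the bracket $[W,X]$ is vertical. Second, $\nabla$ preserves both $\mathcal{H}$ and $\mathcal{V}$.

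\emph{Part a).} Let $X\in\mathcal{V}$ and let $Y$ be basic. Rewrite $\nabla_XY=\nabla_YX+[X,Y]+T(X,Y)^\#$ and pair with $Z\in\mathcal{H}$. The term $\nabla_YX$ is vertical because $\mathcal{V}$ is $\nabla$-invariant, and $[X,Y]$ is vertical because $Y$ is basic. Hence only $T(X,Y,Z)$ survives. Tensoriality of the left-hand side then follows from that of the right-hand side.

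\emph{Part b).} Take $X,Y,Z$ basic. The first step is to show that $\nabla_XY$ is horizontal: for $W\in\mathcal{V}$ we have $g(\nabla_XY,W)=-g(Y,\nabla_XW)=0$. By \eqref{pinabla}, $\nabla_XY$ is therefore exactly the basic lift of $\check\nabla_{\pi_*X}\pi_*Y$. Two consequences follow.
\begin{itemize}
\item Iterating, $\nabla_X\nabla_YZ$ is the lift of $\check\nabla\check\nabla Z$.
\item The vertical part of the bracket is $[X,Y]^{\mathcal{V}}=-T(X,Y)^{\mathcal{V}}=-T^m(X,Y)$, whereas its horizontal part is the lift of $[\pi_*X,\pi_*Y]$.
\end{itemize}
Hence $R(X,Y,Z,V)$ equals $\check R$ evaluated on the projections, minus $g(\nabla_{[X,Y]^{\mathcal V}}Z,V)$. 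Part a) converts this last term into $-T(T^m(X,Y),Z,V)=-g(T^m(X,Y),T^m(Z,V))$. The remaining work is checking signs.

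\emph{Part c).} Take $X,Y\in\mathcal{H}$ and $Z,V\in\mathcal{V}$. I would first apply the Bianchi identity \eqref{Bianchi} with $\mathrm{d}T=0$ to the triple $(X,Y,Z)$ and last slot $V$. The two cyclic terms $R(Y,Z,X,V)$ and $R(Z,X,Y,V)$ vanish because $R(\cdot,\cdot)$ preserves $\mathcal{H}$. What remains is
\begin{align*}
R(X,Y,Z,V)=(\nabla_VT)(X,Y,Z)-\sigma_T(X,Y,Z,V),
\end{align*}
where only the $T^m$-component enters $\nabla_VT$, since $\nabla$ preserves types. Next I would compute $(\nabla_VT^m)(X,Y,Z)$ for basic $X,Y$. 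By the same argument as in part a), the horizontal part of $\nabla_VX$ is $T^m(V,X)$, and $\nabla_VZ$ is vertical. This expresses $(\nabla_VT^m)(X,Y,Z)$ through the fibre derivative $V(T^m(X,Y,Z))$ and quadratic $T^m$-terms. I then expect the quadratic terms to cancel against $\sigma_T(X,Y,Z,V)$, which in this configuration consists only of $g(T^m(\cdot,\cdot),T^m(\cdot,\cdot))$ contractions.

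\emph{Main obstacle.} This last cancellation is where I expect the difficulty. One has to control the derivative $V(T^m(X,Y,Z))=V\,g([Y,X]^{\mathcal{V}},Z)$ along the fibres. I would handle it through the $\mathcal{H}\mathcal{H}\mathcal{V}\mathcal{V}$-component of $\mathrm{d}T=0$, computed via \eqref{exterior} exactly as in the proof of \autoref{submthm}. If this does not close up, the fallback is the pair-symmetry identity for closed skew torsion, $R(X,Y,Z,V)=R^-(Z,V,X,Y)$, where $R^-$ is the curvature of $\nabla^g-\frac12T$. On $\Lambda^2\mathcal{V}\otimes\Lambda^2\mathcal{H}$ it should be evaluable through totally geodesic fibres and part a).
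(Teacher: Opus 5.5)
Your parts a) and b) follow the paper's proof: extend by basic fields, use \eqref{pinabla} to identify $\nabla_X\nabla_YZ$ with the lift of $\check\nabla\check\nabla Z$, split off $[X,Y]^{\mathcal V}$, and convert $g(\nabla_{[X,Y]^{\mathcal V}}Z,V)$ with part a).

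On the signs you postpone: your intermediate steps are correct, and carried through they give $g(\nabla_{[X,Y]^{\mathcal V}}Z,V)=-g(T^m(X,Y),T^m(Z,V))$. Hence $R(X,Y,Z,V)=\check R(\pi_*X,\pi_*Y,\pi_*Z,\pi_*V)+g(T^m(X,Y),T^m(Z,V))$. Do not force the printed minus sign; the plus sign is the correct one. The paper's proof contains two compensating slips, $\check R=R-g(\nabla_{[X,Y]_{\mathcal V}}Z,V)$ and $[X,Y]_{\mathcal V}=+T^m(X,Y)$, and so itself implies the plus sign. A check: take $SU(2)$ with orthonormal left-invariant $e_1,e_2,e_3$, $[e_1,e_2]=2e_3$ cyclically, the flat connection making left-invariant fields parallel, and $\mathcal V=\R e_3$. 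Then $T=-2e^{123}=T^m$, $R=0$, the base $S^2$ has curvature $4$, and $g(T^m(e_1,e_2),T^m(e_2,e_1))=-4$.

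For c), your reduction $R(X,Y,Z,V)=(\nabla_VT)(X,Y,Z)-\sigma_T(X,Y,Z,V)$ is correct, but the primary route then has a genuine gap exactly where you flag it. Two claims are inaccurate:
\begin{enumerate}[(i)]
\item $\sigma_T(X,Y,Z,V)$ contains $g(T(X,Y),T(Z,V))=g(T^m(X,Y),T^{\mathcal V}(Z,V))$, so it is not built from $T^m$ alone.
\item $T^m(X,Y,\nabla_VZ)$ is not a quadratic torsion term.
\end{enumerate}
Do the computation with $X,Y$ basic and $W\coloneqq T^m(X,Y)=-[X,Y]^{\mathcal V}$. The two genuine $T^m$-$T^m$ terms cancel against $\sigma_T$, and what is left is
\[R(X,Y,Z,V)=Q(V,Z)\coloneqq g\big(\nabla_VW-T(V,W),Z\big).\]
This is a fibrewise derivative of the vertical field $[X,Y]^{\mathcal V}$, which no algebraic cancellation removes.

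The $\mathcal H\mathcal H\mathcal V\mathcal V$-component of $\mathrm dT=0$, expanded with \eqref{exterior}, reads $(\nabla_ZT)(X,Y,V)-(\nabla_VT)(X,Y,Z)+2\sigma_T(X,Y,Z,V)=0$. By the same computation this is precisely $Q(Z,V)=Q(V,Z)$. So closedness kills only the skew part of $Q$. The symmetric part, $\frac12(\mathcal L_Wg)(V,Z)$ (whether $[X,Y]^{\mathcal V}$ is Killing on the fibres), is untouched.

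The missing step is that $Q$ is also skew, because $R(X,Y,Z,V)=-R(X,Y,V,Z)$: apply your reduced Bianchi identity a second time with $Z$ and $V$ interchanged. Symmetric and skew give $Q=0$. Equivalently, and with no basic-field computation at all, the two Bianchi identities give $(\nabla_VT)(X,Y,Z)=-(\nabla_ZT)(X,Y,V)$. Inserting this into the $\mathrm dT$-relation yields $(\nabla_VT)(X,Y,Z)=\sigma_T(X,Y,Z,V)$, i.e. $R(X,Y,Z,V)=0$.

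Your fallback is correct, closes in a few lines, and differs from the paper:
\begin{enumerate}[(i)]
\item By part a) and $T(V,X)\in\mathcal H$ (there is no $\Lambda^2\mathcal V\wedge\mathcal H$-component), $\nabla_VX=T(V,X)$ for basic $X$ and vertical $V$. That is, $\nabla^-_VX=0$ for $\nabla^-=\nabla^g-\frac12T$.
\item Since $\mathcal V$ is integrable, this gives $R^-(Z,V)X=0$ for vertical $Z,V$.
\item Then $R(X,Y,Z,V)-R^-(Z,V,X,Y)=\frac12\mathrm dT(X,Y,Z,V)=0$ finishes. Total geodesy of the fibres is not needed.
\end{enumerate}

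The paper instead quotes the Ivanov--Stanchev identity $\cyclic{Y,Z,V}R(X,Y,Z,V)=-\frac12\mathrm dT(X,Y,Z,V)+(\nabla_XT)(Y,Z,V)$. For $X,Y\in\mathcal H$ and $Z,V\in\mathcal V$, two of the cyclic terms vanish by holonomy invariance, and $\nabla_XT$ has no $\mathcal H\otimes\Lambda^2\mathcal V$-component. So the paper's argument is a pure type argument without basic fields. For closed torsion, that identity is itself a consequence of the pair symmetry above combined with the Bianchi identity of $\nabla^-$. Both routes therefore rest on the same fact. The paper's is shorter once the identity is granted; yours is more explicit and locates the vanishing in $\nabla^-_VX=0$.
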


\begin{proof}
For the first identity, recall that for projectable vector fields $\pi_*[X,Y]=[\pi_*X,\pi_*Y]$ and hence, $[X,Y]\in\mathcal{V}$. Thus,
\begin{align*}
g(\nabla_XY,Z)=g(\nabla_YX+[X,Y],Z)+T(X,Y,Z)=T(X,Y,Z).
\end{align*}
As the identity is entirely tensorial we may extend all vector fields as horizontal lifts. Then, by \eqref{pinabla},
\begin{align*}
\check{R}(\pi_*X,\pi_*Y,\pi_*Z,\pi_*V)&= g_N((\check{\nabla}_{\pi_*X}\check{\nabla}_{\pi_*Y}-\check{\nabla}_{\pi_*Y}\check{\nabla}_{\pi_*X}-\check{\nabla}_{[\pi_*X,\pi_*Y]})\pi_*Z,\pi_*V)\\
&=g((\nabla_X\nabla_Y-\nabla_Y\nabla_X-\nabla_{[X,Y]-[X,Y]_\mathcal{V}})Z,V)\\
&=R(X,Y,Z,V)-g(\nabla_{[X,Y]_\mathcal{V}}Z,V),
\end{align*}
where $[X,Y]_\mathcal{V}=[X,Y]-\overline{\pi_*[X,Y]}=[X,Y]-\overline{[\pi_*X,\pi_*Y]}\in\mathcal{V}$ is the vertical part of the commutator. Further, for any $Z\in\mathcal{V}$ we get $g([X,Y]_\mathcal{V},Z)=g([X,Y],Z)=g([X,Y]-\nabla_YX-\nabla_XY)=T(X,Y,Z)$. Now using part a)
\begin{align*}
g(\nabla_{[X,Y]_\mathcal{V}}Z,V)&=T([X,Y]_\mathcal{V},Z,V)=g^\mathcal{V}(T(X,Y),T(Z,V)).
\end{align*}
Part c) uses an inverted Bianchi identity for skew-torsion due to \cite[Proposition 2.1]{IvanovStan}
\begin{align*}
\cyclic{Y,Z,V}R(X,Y,Z,V)=-\frac 12\mathrm{d}T(X,Y,Z,V)+(\nabla_X T)(Y,Z,V).
\end{align*}
Observe that $R(X,Z,V,Y)=0=R(X,V,Y,Z)$ as the two back entries are in different components of $TM=\mathcal{V}\oplus\mathcal{H}$. Hence, only $R(X,Y,Z,V)$ remains on the left hand side. On the right hand side $\mathrm{d}T=0$ and $\nabla$ preserves the splitting of $\Lambda^3TM$. Thus, the component $\nabla_XT$ vanishes on vectors in $\Lambda^2\mathcal{V}\otimes\mathcal{H}$.
\end{proof}

\begin{theorem}\label{holthm}
Let $\pi\colon M\to N$ be a Riemannian submersion, $\nabla$ and $\check{\nabla}$ connections on $M$ and $N$, respectively such that $\check{\nabla}_XY=\pi_*(\nabla_{\overline{X}}\overline{Y})$. Suppose $\Omega$ is a tensor field such that $\mathrm{Hol}_0(\nabla)\subset\mathrm{Stab}(\Omega)$ and $\Omega$ projects to a tensor $\check{\Omega}$ on $N$. Then $\mathrm{Hol}_0(\check\nabla)\subset \mathrm{Stab}(\check\Omega)$.
\end{theorem}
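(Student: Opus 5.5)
The plan is to compare holonomy along loops. I work in the setting where the theorem is applied, namely \autoref{submthm}: $\mathcal{H}$ and $\mathcal{V}$ are $\nabla$-invariant and the fibres are connected. Restricting to a small neighbourhood if necessary, I assume the fibres are simply connected.

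\textbf{Step 1: horizontal lifts.} Let $\gamma$ be a null-homotopic loop in $N$ based at $p$. Fix $x\in\pi^{-1}(p)$ and let $\overline{\gamma}$ be the horizontal lift of $\gamma$ starting at $x$; it ends at some $x'$ in the same fibre. Writing a $\check\nabla$-parallel field along $\gamma$ in terms of basic fields and using \eqref{pinabla}, its horizontal lift is $\nabla$-parallel along $\overline{\gamma}$. Since $\mathcal{H}$ is $\nabla$-invariant, this gives
\begin{align*}
\pi_*\circ P^{\nabla}_{\overline\gamma}\big|_{\mathcal{H}_x}=P^{\check\nabla}_\gamma\circ\pi_*\big|_{\mathcal{H}_x}.
\end{align*}

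\textbf{Step 2: closing the loop.} Choose a path $c$ in the fibre from $x'$ back to $x$, so that $\sigma=c*\overline\gamma$ is a closed loop at $x$. A null-homotopy of $\gamma$ lifts horizontally, and the endpoints can be joined inside the simply connected fibres. Hence $\sigma$ is null-homotopic, and $P^\nabla_\sigma\in\mathrm{Hol}_0(\nabla)_x\subset\mathrm{Stab}(\Omega_x)$. Restricting to $\mathcal{H}_x$ and identifying $\mathcal{H}_x\cong T_pN$ by $\pi_*$, we get
\begin{align*}
\pi_*\circ P^\nabla_\sigma\big|_{\mathcal{H}_x}=\Phi_c\circ P^{\check\nabla}_\gamma,\qquad \Phi_c\coloneqq \pi_*\circ P^\nabla_c\circ(\pi_*|_{\mathcal{H}_{x'}})^{-1}\in \mathrm{SO}(T_pN).
\end{align*}
Because $\Omega$ projects, $\Omega_x|_{\mathcal{H}}$ and $\Omega_{x'}|_{\mathcal{H}}$ both correspond to $\check\Omega_p$. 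It therefore suffices to show that $\Phi_c$ fixes $\check\Omega_p$; then $P^{\check\nabla}_\gamma$ fixes $\check\Omega_p$ as well.

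\textbf{Step 3: vertical transport (main obstacle).} Here $\Phi_c$ records how $\nabla$-transport along the fibre acts on horizontal vectors, measured against the basic-field identification of the $\mathcal{H}$-spaces along the fibre. Take a basic field $Z$ and a vertical $V$. By part a) of the preceding proposition, $\mathrm{pr}_{\mathcal{H}}\nabla_VZ=T(V,Z)^{\mathcal{H}}$. Hence $\Phi_c$ is the solution of a linear ODE along $c$ whose generator is $A_V\coloneqq T(V,\cdot)|_{\mathcal{H}}\in\mathfrak{so}(\mathcal{H})$, built from $T^m$.

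It then suffices to show that every $A_V$ annihilates $\Omega|_{\mathcal{H}}$, i.e.\ $A_V\cdot\check\Omega=0$ pointwise. I would first try to read this off from projectability. Since $[V,Z]$ is vertical for basic $Z$, we have $L_V(\Omega|_{\mathcal{H}})=0$. Expanding $L_V$ through $\nabla$ gives the identity
\begin{align*}
0=(\nabla_V\Omega)|_{\mathcal{H}}-A_V\cdot\Omega|_{\mathcal{H}}.
\end{align*}
This reduces the claim to showing that $(\nabla_V\Omega)|_{\mathcal{H}}=0$.

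For the latter I would apply the hypothesis to small vertical–horizontal loops. By part c) of the preceding proposition, $R(X,Y)|_{\mathcal{V}}=0$ for $X,Y\in\mathcal{H}$. The mixed curvatures $R(V,X)$ lie in $\mathfrak{hol}$ and annihilate $\Omega$ pointwise. I would then differentiate the pointwise condition $\mathfrak{hol}_0\cdot\Omega=0$ along the fibre and hope to conclude that $\nabla_V\Omega$ vanishes on $\mathcal{H}$.

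If that direct route fails, the fallback is to avoid needing $\Phi_c$ at all. One can reparametrise the closing path so that $\Phi_c$ itself lies in the holonomy of $\nabla$ restricted to fibre directions. This holonomy is contained in $\mathrm{Hol}_0(\nabla)$ because the fibres are simply connected, and it therefore fixes $\Omega$ by hypothesis.
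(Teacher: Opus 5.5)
Your Steps 1 and 2 are sound; Step 1 is exactly the paper's \autoref{lemma:paralleltransport}. Step 3, however, is not a proof. You only ``hope to conclude'' that $(\nabla_V\Omega)|_{\mathcal{H}}=0$. The fallback is also wrong: $\Phi_c$ is not a holonomy element of anything. It compares $\nabla$-transport along an open fibre path with the identification of the $\mathcal{H}$-spaces by basic fields.

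No argument can close this gap if you only use the pointwise condition $\mathfrak{hol}\cdot\Omega=0$. Here is a counterexample.
\begin{itemize}
\item Take $M=\mathrm{SU}(2)$ with a bi-invariant metric and the flat connection making left-invariant fields parallel, so $T(X,Y)=-[X,Y]$.
\item Set $\mathcal{V}=\langle e_1\rangle$ and $\mathcal{H}=\langle e_2,e_3\rangle$. All hypotheses of \autoref{submthm} hold, $\pi$ is the Hopf map, and $\check T=0$.
\item Hence $\check\nabla$ is the Levi-Civita connection of the round $S^2$, with holonomy $\mathrm{SO}(2)$.
\item Any projectable tensor, e.g.\ $\Omega=\pi^*\theta$ for a nonzero local $1$-form $\theta$, is fixed by the trivial group $\mathrm{Hol}_0(\nabla)$. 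But $\check\Omega=\theta$ is not fixed by $\mathrm{SO}(2)$.
\end{itemize}
In this example $A_{e_1}=T(e_1,\cdot)|_{\mathcal{H}}$ is a nonzero rotation, and basic fields rotate against the parallel frame along the fibre. So $\Phi_c$ is a nontrivial rotation, and in fact such rotations generate $\mathrm{Hol}(\check\nabla)$. Also $(\nabla_V\Omega)|_{\mathcal{H}}=A_V\cdot\Omega|_{\mathcal{H}}\neq 0$, even though all curvature vanishes. So differentiating curvature conditions along the fibre cannot give what you need.

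The missing idea is that $\Omega$ must be $\nabla$-parallel, as $J$ and $\sum_i\omega_i\wedge\omega_i$ are in the applications, and the paper's proof uses precisely this. It assumes $\mathcal{P}^\nabla_{\overline\gamma}\Omega=\Omega$ along the \emph{open} horizontal lift $\overline\gamma$ from $x$ to $x'$. It then combines \autoref{lemma:paralleltransport} with the fact that $\Omega_x|_{\mathcal{H}}$ and $\Omega_{x'}|_{\mathcal{H}}$ both correspond to $\check\Omega_p$. This gives $\mathcal{P}^{\check\nabla}_\gamma\check\Omega=\check\Omega$ directly, for every loop $\gamma$. No closing path, simply connected fibres or vertical transport is needed.

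Under that hypothesis your Step 3 becomes trivial. Since $\nabla_V\Omega=0$, your identity yields $A_V\cdot\check\Omega=0$; or, more directly, $\mathcal{P}^\nabla_c\Omega_{x'}=\Omega_x$. The detour through the closed loop $\sigma=c*\overline\gamma$ is then superfluous.
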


\begin{rem}
Here we consider $\check{\Omega}$ a projection of an $(s,t)$-tensors $\Omega$ if 
\begin{align}\label{tensorprojection}
g(\Omega(\overline{X}_1,\dots,\overline{X}_s),\overline{Y}_1\otimes\dots\otimes \overline{Y}_t)=\check{g}(\check\Omega(X_1,\dots,X_s),Y_1\otimes\dots\otimes Y_t)
\end{align}
for all $X_1,\dots, X_s,Y_1,\dots,Y_t\in T_xN$. Such an $\check{\Omega}$ exists if and only if
\begin{gather*}
\mathcal{L}_V\Omega=0,\\
g(\Omega(X_1,\dots,V,\dots, X_s),Y_1\otimes\dots\otimes Y_t)=0
\end{gather*}
for all $V\in \mathcal{V}$ and $X_1,\dots,X_s,Y_1,\dots,Y_t\in \mathcal{H}$. Indeed, $\check{\Omega}$ is well-defined by \eqref{tensorprojection} if the left hand side does not depend on the point in the fiber over $x\in N$. The obstruction is
\begin{align*}
V(g^\mathcal{H}(\Omega(\overline{X}_1&,\dots,\overline{X}_s),\overline{Y}_1\otimes\dots\otimes \overline{Y}_t))\\
&=(\mathcal{L}_Vg^{\mathcal{H}})(\Omega(\overline{X}_1,\dots,\overline{X}_s), \overline{Y}_1\otimes\dots\otimes \overline{Y}_t)\\
 &\qquad+g^{\mathcal{H}}((\mathcal{L}_V\Omega)(\overline{X}_1,\dots,\overline{X}_s),\overline{Y}_1\otimes\dots\otimes \overline{Y}_t)\\
&\qquad +\sum_{i=1}^sg^\mathcal{H}(\Omega(\overline{X}_1,\dots, \mathcal{L}_V\overline{X}_i,\dots,\overline{X}_s), \overline{Y}_1\otimes\dots\otimes \overline{Y}_t)\\
&\qquad +\sum_{i=1}^tg^\mathcal{H}(\Omega(\overline{X}_1,\dots,\overline{X}_s), \overline{Y}_1\otimes\dots\otimes \mathcal{L}_V\overline{Y}_i\otimes\dots\otimes \overline{Y}_t).
\end{align*}
where $\mathcal{L}_Vg^\mathcal{H}=0$ as $\pi$ is a Riemannian submersion and $\mathcal{L}_V\overline{X}\in\mathcal{V}$ for any horizontal lift $\overline{X}$.
\end{rem}

\begin{lemma}\label{lemma:paralleltransport}
Under the assumptions in the theorem parallel transport $\mathcal{P}^{\nabla}$ and $\mathcal{P}^{\check{\nabla}}$ satisfy
\begin{align*}
\mathcal{P}_{\overline{\gamma}}^\nabla\overline{X}=\overline{\mathcal{P}_\gamma^{\check{\nabla}}X}
\end{align*}
where $\overline{\gamma}$ is the horizontal lift of the curve $\gamma$ on $N$.
\end{lemma}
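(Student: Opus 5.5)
The approach is to show that the horizontal lift of a $\check{\nabla}$-parallel field along $\gamma$ is $\nabla$-parallel along $\overline{\gamma}$. Uniqueness of parallel transport then gives the identity.

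Fix a curve $\gamma\colon[0,1]\to N$ with horizontal lift $\overline{\gamma}$ starting at a chosen point in the fiber over $\gamma(0)$. Let $X(t)=\mathcal{P}^{\check{\nabla}}_{\gamma|_{[0,t]}}X$, so $\check{\nabla}_{\dot\gamma}X=0$, and let $\overline{X}(t)$ be its horizontal lift at $\overline{\gamma}(t)$. It suffices to prove $\nabla_{\dot{\overline{\gamma}}}\overline{X}=0$. Since $\dot{\overline{\gamma}}$ is horizontal, I split this into a horizontal and a vertical component.

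\emph{Horizontal component.} I first reduce to vector fields. Locally extend $X(t)$ to a vector field $\tilde X$ on $N$, and if $\dot\gamma\neq0$ extend $\dot\gamma$ to a field $\tilde Y$. Their horizontal lifts $\overline{\tilde X},\overline{\tilde Y}$ are basic fields with $\overline{\tilde Y}$ restricting to $\dot{\overline{\gamma}}$ along $\overline{\gamma}$. The quantity $\nabla_{\dot{\overline\gamma}}\overline X$ depends only on $\overline X$ along the curve, and $\overline{\tilde X}$ restricts to $\overline X$ there, so
\begin{align*}
\pi_*\big(\nabla_{\dot{\overline{\gamma}}}\overline{X}\big)=\pi_*\big(\nabla_{\overline{\tilde Y}}\overline{\tilde X}\big)=\check{\nabla}_{\tilde Y}\tilde X=\check{\nabla}_{\dot\gamma}X=0
\end{align*}
by \eqref{pinabla}. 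Where $\dot\gamma=0$ the claim is trivial, and a general piecewise-smooth curve is handled piece by piece. If one prefers to avoid extensions, the same follows from a local frame computation: write $X=\sum a_i(t)E_i$ with $\overline{X}=\sum a_i(t)\overline{E_i}$, then apply the Leibniz rule and \eqref{pinabla}.

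\emph{Vertical component.} This is the main obstacle, since \eqref{pinabla} says nothing about it. The vertical part of $\nabla_{\overline Y}\overline X$ for basic fields is not automatically zero: it equals $\tfrac12[\overline Y,\overline X]_{\mathcal{V}}+\tfrac12T(\overline Y,\overline X)_{\mathcal{V}}$, i.e. the O'Neill $A$-tensor term plus a torsion term. In the setting of \autoref{submthm}, however, $\mathcal{H}$ is $\nabla$-parallel because the splitting is holonomy invariant. Hence $\nabla$ of a horizontal field along any curve remains horizontal. So I would use the standing hypotheses that $\nabla$ preserves $\mathcal{H}$, which hold in the situation of \autoref{submthm} that the theorem refers to. If one wants it purely from the theorem's hypotheses, I would instead argue via $g(\nabla_{\overline Y}\overline X,V)=-g(\overline X,\nabla_{\overline Y}V)$ together with invariance of $\mathcal{V}$. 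Either way the vertical component vanishes.

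Combining the two components, $\nabla_{\dot{\overline\gamma}}\overline{X}=0$ with $\overline{X}(0)$ the lift of $X$. By uniqueness of solutions of the parallel transport ODE, $\mathcal{P}^\nabla_{\overline\gamma}\overline{X}=\overline{X}(1)=\overline{\mathcal{P}^{\check\nabla}_\gamma X}$.
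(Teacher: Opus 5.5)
Your proof is the paper's argument: \eqref{pinabla} kills the horizontal part of $\nabla_{\dot{\overline{\gamma}}}\overline{X}$ and uniqueness of parallel transport concludes. You also rightly make explicit a step the paper passes over in silence, namely that the vertical part vanishes because $\nabla$ preserves $\mathcal{H}$; without this, for instance for the Levi-Civita connection of the Hopf fibration, the vertical part is the nonzero O'Neill tensor and the lemma would fail.

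One correction: your alternative via $g(\nabla_{\overline Y}\overline X,V)=-g(\overline X,\nabla_{\overline Y}V)$ is not ``purely from the theorem's hypotheses''. \autoref{holthm} assumes neither that $\nabla$ is metric nor that it preserves $\mathcal{V}$, so this is the same holonomy-invariance assumption in another guise.
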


\begin{proof}
Let $x\in N$ be the starting point of $\gamma\colon[0,1]\to N$ and $x_0\in\pi^{-1}\{x\}$. Consider $\overline{\gamma}$ the unique horizontal lift of $\gamma$ starting at $x_0$ and let $X(t)$ be the parallel vector field along $\gamma$ starting at $X\in T_xN$. By \eqref{pinabla} 
\begin{align*}
\pi_*(\nabla_{\dot{\overline{\gamma}}}\overline{X(t)})=\check{\nabla}_{\dot{\gamma}}X(t)=0.
\end{align*}
Therefore, $\overline{X}(t)$ is the unique parallel vector field along $\overline{\gamma}$ with initial vector $\overline{X}\in T_{x_0}M$. Hence,
\begin{align*}
\overline{\mathcal{P}_\gamma^{\check{\nabla}}X}&=\overline{X(1)}=\overline{X}(1)=\mathcal{P}_{\overline{\gamma}}^\nabla\overline{X}.\qedhere
\end{align*}
\end{proof}

\begin{proof}[Proof of \autoref{holthm}]
Let $\gamma$ be a closed curve in $N$ and suppose $\mathcal{P}_{\overline{\gamma}}\Omega=\Omega$. Then by \autoref{lemma:paralleltransport} and \eqref{tensorprojection}
\begin{align*}
\check{g}((\mathcal{P}^{\check{\nabla}}_\gamma\check{\Omega})(X_1,\dots, X_s),&Y_1\otimes\dots\otimes Y_t)\\
&=\check{g}(\check{\Omega}(\mathcal{P}^{\check{\nabla}}_{-\gamma} X_1,\dots,  \mathcal{P}^{\check{\nabla}}_{-\gamma} X_s), \mathcal{P}^{\check{\nabla}}_\gamma Y_1\otimes\dots\otimes \mathcal{P}^{\check{\nabla}}_\gamma Y_t)\\
&=g(\Omega(\overline{\mathcal{P}^{\check{\nabla}}_{-\gamma} X_1},\dots, \overline{\mathcal{P}^{\check{\nabla}}_{-\gamma} X_s}), \overline{\mathcal{P}^{\check{\nabla}}_\gamma Y_1}\otimes\dots\otimes \overline{\mathcal{P}^{\check{\nabla}}_\gamma Y_t})\\
&=g(\Omega(\mathcal{P}^{\nabla}_{-\overline{\gamma}} \overline{X_1},\dots, \mathcal{P}^{\nabla}_{-\overline{\gamma}} \overline{X_s}), \mathcal{P}^{\nabla}_{\overline{\gamma}} \overline{Y_1}\otimes\dots\otimes \mathcal{P}^{\nabla}_{\overline{\gamma}} \overline{Y_t})\\
&=g((\mathcal{P}_{\overline{\gamma}}^\nabla\Omega)(\overline{X_1},\dots,\overline{X_s}),\overline{Y_1}\otimes \dots \otimes \overline{Y_t})\\
&=g(\Omega(\overline{X_1},\dots,\overline{X_s}),\overline{Y_1}\otimes \dots \otimes \overline{Y_t})\\
&=\check{g}(\check{\Omega}(X_1,\dots,X_s),Y_1\otimes\dots\otimes Y_t).
\end{align*}
Hence, $\mathcal{P}_{\gamma}^{\check{\nabla}}\in \mathrm{Stab}(\check{\Omega})$.
\end{proof}

\begin{rem}
The theorem above is not restricted to the case of \autoref{submthm}. In fact, for any Riemannian submersion $\pi\colon (M,g)\to (N,g_N)$ the Levi-Civita connections projects
\begin{align*}
\nabla^{g_N}_XY=\nabla^g_{\overline{X}}\overline{Y}.
\end{align*}
Additionally, it applies to the submersion theorem for parallel torsion, compare the version in \cite{nKpaper}.
\end{rem}

\begin{rem}
The condition that $\Omega$ projects is crucial here. In \cite{ADS21} we have proved that even though the canonical connection $\nabla$ on a parallel $3$-$(\alpha,\delta)$-Sasaki manifold has $\mathrm{Hol}_0(\nabla)=\mathrm{Sp}(n)$ the holonomy on the quaternionic base only $\mathrm{Sp}(n)\mathrm{Sp}(1)$. In this case the fundamental forms $(\Phi_i)_{i=1,2,3}$ are parallel, but do not project. Nonetheless the combined $3$-form $\Omega=\sum\Phi_i\wedge\Phi_i$ leads to the observed holonomy on the base.
\end{rem}

\section{Submersions in SKT geometry}\label{Chp3}

For the remainder of this paper we will work in a specific geometry with closed torsion, namely SKT, sometimes called pluriclosed, manifolds. Consider a Hermitian manifold $(M,g,J)$ and denote its fundamental form $\omega$. Then there exists a unique Hermitian connection with skew-symmetric torsion
\begin{align*}
T=-\mathrm{d}^c\omega,
\end{align*}
where $\mathrm{d}^c\omega(X,Y,Z)=-\mathrm{d}\omega(JX,JY,JZ)$. This connection is called the Bismut, or sometimes Strominger, connection and agrees with the Levi-Civita connection if and only if $(M,g,J)$ is Kähler.

\begin{definition}
A Hermitian manifold $(M,g,J)$ is called SKT if $dT=0$.
\end{definition}

The fully expanded term "Strong Kähler with Torsion" suggest falsely that SKT manifolds are Kähler. Instead, SKT manifolds are often called pluriclosed, referring to the fact that a Hermitian manifold is SKT iff $\partial\bar{\partial}\omega=0$. We use the term SKT to emphasize the closed torsion involved.

We now want to understand \autoref{submthm} for SKT geometry.

\begin{lemma}\label{SKTProjectable}
Let $(M,g,J)$ be an SKT manifold satisfying the conditions in \autoref{submthm} and suppose that $\mathcal{H}$ is $J$-invariant. Let $\check{T}$ denote the torsion on the base $N$ of the submersion $\pi\colon M\to N$. Then $J$ projects to an almost complex structure $\check{J}$ on $N$ if and only if $(X\intprod T)|_{\Lambda^2\mathcal{H}}\in \Omega^{1,1}(M)$ for all $X\in \mathcal{V}$. In this case $\check{J}$ is integrable and parallel under $\nabla^{\check{T}}$. In particular, $\nabla^{\check{T}}$ is the Bismut connection on $N$.
\end{lemma}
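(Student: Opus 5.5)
The plan is to use the projectability criterion from the remark after \autoref{holthm}, applied to the $(1,1)$-tensor $J$. Since $\mathcal{H}$ is $J$-invariant and $J$ is $g$-orthogonal, the complement $\mathcal{V}=\mathcal{H}^\perp$ is also $J$-invariant. The second condition of that remark, that $J$ applied to a vertical vector has no horizontal component, is therefore automatic. Everything reduces to checking when the horizontal part of $(\mathcal{L}_VJ)\overline{X}$ vanishes, for $V\in\mathcal{V}$ and $\overline{X}$ basic.

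\emph{Step 1 (the main computation).} For basic $\overline{X}$ we have $[V,\overline{X}]\in\mathcal{V}$, so $J[V,\overline{X}]\in\mathcal{V}$. Hence the horizontal part of $(\mathcal{L}_VJ)\overline{X}$ equals the horizontal part of $[V,J\overline{X}]$. I expand brackets through $\nabla$:
\begin{align*}
[V,J\overline{X}]=\nabla_V(J\overline{X})-\nabla_{J\overline{X}}V-T(V,J\overline{X}).
\end{align*}
Three facts simplify this.
\begin{itemize}
\item The term $\nabla_{J\overline{X}}V$ is vertical, since $\mathcal{V}$ is $\nabla$-invariant.
\item The Bismut connection satisfies $\nabla J=0$, so $\nabla_V(J\overline{X})=J\nabla_V\overline{X}$.
\item Expanding once more, $\nabla_V\overline{X}=[V,\overline{X}]+\nabla_{\overline{X}}V+T(V,\overline{X})$, whose horizontal part is $T(V,\overline{X})_\mathcal{H}$.
\end{itemize}
Putting these together,
\begin{align*}
\mathrm{pr}_\mathcal{H}\,(\mathcal{L}_VJ)\overline{X}=J\,T(V,\overline{X})_\mathcal{H}-T(V,J\overline{X})_\mathcal{H}.
\end{align*}
Pairing with a horizontal $Y$ and using orthogonality of $J$, this vanishes for all $X,Y\in\mathcal{H}$ if and only if $T(V,JX,JY)=T(V,X,Y)$. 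That is exactly the statement that $(V\intprod T)|_{\Lambda^2\mathcal{H}}$ is of type $(1,1)$, which gives the equivalence. I expect this step to be the main obstacle: the signs and the vertical/horizontal bookkeeping must be done carefully.

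\emph{Step 2 (integrability).} Assume $J$ projects to $\check J$, so that $\overline{\check JX}=J\overline{X}$. The horizontal part of $[\overline{X},\overline{Y}]$ is $\overline{[X,Y]}$, and $J$ preserves the splitting. It follows that $\pi_*N_J(\overline{X},\overline{Y})=N_{\check J}(X,Y)$. Since $J$ is integrable, $N_J=0$, and hence $N_{\check J}=0$. The metric $g_N$ is $\check J$-Hermitian because $g|_{\mathcal{H}}$ is $J$-invariant.

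\emph{Step 3 (parallelism and identification).} By \eqref{pinabla},
\begin{align*}
\check\nabla_X(\check JY)=\pi_*(\nabla_{\overline{X}}J\overline{Y})=\pi_*(J\nabla_{\overline{X}}\overline{Y})=\check J\,\pi_*(\nabla_{\overline{X}}\overline{Y})=\check J\check\nabla_XY,
\end{align*}
where the third equality uses that $J$ commutes with $\mathrm{pr}_\mathcal{H}$. So $\check J$ is $\check\nabla$-parallel. By \autoref{submthm}, $\check\nabla=\nabla^{g_N}+\frac12\check T$ is metric with skew torsion $\check T$. A Hermitian connection with skew-symmetric torsion on a Hermitian manifold is unique, so $\check\nabla$ is the Bismut connection of $(N,g_N,\check J)$.
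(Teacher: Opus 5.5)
Your proof is correct. Step 1 is essentially the paper's own computation. The paper expands $g((\mathcal{L}_XJ)Y,Z)$ for $X\in\mathcal{V}$ and horizontal $Y,Z$, using $\nabla J=0$ and the $\nabla$-invariance of $\mathcal{V}$, and obtains $-T(X,JY,Z)-T(X,Y,JZ)$. That is exactly your pairing; working with basic fields, as you do, just makes the link with projectability explicit.

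The second half takes a different route. The paper first derives $\check\nabla\check J=0$ from \autoref{holthm}. Only then does it prove integrability, via the identity $N_J(X,Y,Z)=-T(X,Y,Z)+T(X,JY,JZ)+T(JX,Y,JZ)+T(JX,JY,Z)$. This identity holds for any Hermitian connection, and the paper applies it to $\check\nabla$, whose torsion pulls back to $T^{\mathcal{H}}$. So its integrability argument depends on parallelism having been established first.

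You instead get $N_{\check J}(X,Y)=\pi_*N_J(\overline{X},\overline{Y})=0$ directly, from the $\pi$-relatedness of basic fields and of $J$ with $\check J$. You then get $\check\nabla\check J=0$ by a one-line computation from \eqref{pinabla}.

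Your version is more elementary and gives parallelism on the nose. By contrast, an inclusion $\mathrm{Hol}_0(\check\nabla)\subset\mathrm{Stab}(\check J)$ on its own only controls transport around loops. Strictly speaking, the paper's step therefore also needs \autoref{lemma:paralleltransport} along open curves. The paper's route, in turn, showcases its general holonomy-projection theorem and the torsion expression for the Nijenhuis tensor.

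The final identification of $\check\nabla$ with the Bismut connection, by uniqueness of the Hermitian connection with skew torsion, is the same in both.
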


\begin{proof}
We compute
\begin{align*}
g((\mathcal{L}_XJ)Y,Z)&=g([X,JY]-J[X,Y],Z)\\
&=g(\nabla_X(JY)-\nabla_{JY}X,Z)-T(X,JY,Z)\\
&\qquad-g(J\nabla_XY-J\nabla_YX,Z)-T(X,Y,JZ)\\
&=g(\nabla_{JY}X-\nabla_Y(JX),Z)-T(X,JY,Z)-T(X,Y,JZ)
\end{align*}
where we have used that $\nabla J=0$. If $X\in \mathcal{H}$ and $Z\in\mathcal{V}$ then as the holonomy splits the covariant derivative summand vanishes. It follows that $J$ projects to an almost complex structure $\check{J}\in\mathrm{End} T\!N$ if and only if $T(X,JY,Z)+T(X,Y,JZ)=0$ for all $X\in\mathcal{V}$ and $Y,Z\in\mathcal{H}$ or equivalently $X\intprod T|_{\Lambda^2\mathcal{H}}\in\Omega^{1,1}(M)$.

Now that $J$ projects to an almost complex structure $\check{J}$ on $N$ by \autoref{holthm} we find $\mathrm{Hol}_0(\check{\nabla})\subset \mathrm{Stab}(\check{J})$. This shows $\check{\nabla}\check{J}=0$. It remains to show that $J$ is integrable. Observe that for any Hermitian connection $\nabla$ with torsion $T$ we have
\begin{align*}
N_J(X,Y,Z)&=g([X,Y]+J[JX,Y]+J[X,JY]-[JX,JY],Z)\\
&=g(\nabla_XY-\nabla_YX,Z)-T(X,Y,Z)\\
&\qquad-g(\nabla_{JX}Y-\nabla_Y(JX),JZ)+T(JX,Y,JZ)\\
&\qquad-g(\nabla_X(JY)-\nabla_{JY}X,JZ)+T(X,JY,JZ)\\
&\qquad-g(\nabla_{JX}(JY)-\nabla_{JY}(JX),Z)+T(JX,JY,Z)\\
&=-T(X,Y,Z)+T(X,JY,JZ)+T(JX,Y,JZ)+T(JX,JY,Z).
\end{align*}
As this vanishes for the Bismut connection on $M$ the same holds for the connection $\check{\nabla}$ on $N$ with torsion given by $\pi^*\check{T}=T^\mathcal{H}$, implying that $J$ is integrable.
\end{proof}

In the rest of this section we shall frame two submersion results on Bismut Hermite-Einstein and sHKT manifolds as instances of \autoref{submthm}. We recall the definitions.

\begin{definition}
A \emph{SKT} manifold $(M^{2n},g,J)$ is called BHE (Bismut-Hermite Einstein) if 
\begin{align*}
\rho^B(X,Y)=\frac 12\sum_{i=1}^{2n}R^B(X,Y,Je_i,e_i)=0,
\end{align*}
where $\{e_i\}$ is an orthonormal frame and $R^B$ is the Bismut curvature tensor.

A \emph{hyperhermitian} manifold $(M^{4m},g,I_1,I_2,I_3)$ consists of three complex structures $(I_\alpha)_{\alpha=1,2,3}$ that satisfy the quaternionic relations and are all compatible with a single Riemannian metric $g$. A hyperhermitian manifold is called \emph{HKT} if the Bismut connections of all three Hermitian structures agree, i.e. $T=T^{I_1}=T^{I_2}=T^{I_3}$. Finally, it is called \emph{sHKT} if, in addition, $\mathrm{d}T=0$.
\end{definition}

As the Bismut connection is Hermitian its holonomy is always contained in $\mathrm{U}(n)$. The condition $\rho^B=0$ is equivalent to the $\mathrm{Hol}(\nabla^B)\subset \mathrm{SU}(n)$. In the HKT case the Bismut connection preserves $I_1,I_2,I_3$ and $g$ so its holonomy lies in $\mathrm{Sp}(m)\subset \mathrm{SU}(2n)$. In particular, any sHKT manifold is also BHE.

In \cite{Rigidity} the authors proof that any non-Kähler Bismut Hermitian-Einstein manifold admits a non-vanishing vector field $V$ such that $V$ and $JV$ are parallel with respect to the Bismut connection. Furthermore, $J$ is preserved under the flow of $V$ and $JV$:
\begin{align}\label{Liederiv}
\mathcal{L}_VJ=\mathcal{L}_{JV}J=0.
\end{align}
They continue to proof that the distribution $\langle V,JV\rangle$ defines a Riemannian foliation. We rephrase their result using the submersion theorem.

\begin{theorem}\label{thmBHE}
Let $(M,g,J)$ be a non-Kähler BHE manifold. Then there exists a locally defined Riemannian submersion $\pi\colon M\to N$ along the distribution spanned by $\{V, JV\}$. $J$ projects to an integrable complex structure $\check J$ on $N$ such that the Bismut connection $\check\nabla$ on $(N, g_N,\check{J})$ and $\nabla$ on $(M,g,J)$ are related via
\begin{align*}
\check\nabla_XY=\pi_*(\nabla_{\overline{X}}\overline{Y}).
\end{align*}
\end{theorem}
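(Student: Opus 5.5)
We apply \autoref{submthm} with $\mathcal{V}=\langle V,JV\rangle$ and $\mathcal{H}=\mathcal{V}^\perp$, then use \autoref{SKTProjectable} for the complex structure. Both conditions come from the input of \cite{Rigidity}: $V$ and $JV$ are nowhere vanishing and $\nabla$-parallel, and \eqref{Liederiv} holds.

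\textbf{Step 1: the splitting is holonomy invariant.} Since $V$ and $JV$ are $\nabla$-parallel, $\mathcal{V}$ is spanned by parallel fields. Hence $\mathrm{Hol}(\nabla)$ acts trivially on $\mathcal{V}$. Because $\nabla$ is metric, it preserves the orthogonal complement $\mathcal{H}$. Also $J\mathcal{V}=\mathcal{V}$ and $J$ is orthogonal, so $\mathcal{H}$ is $J$-invariant.

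\textbf{Step 2: the torsion condition \eqref{projecttau}.} We must show that the $\Lambda^2\mathcal{V}\wedge\mathcal{H}$-component of $T$ vanishes, which by \autoref{RemIntegrable} is equivalent to integrability of $\mathcal{V}$. Because the connection has skew torsion, $V$ and $JV$ are Killing fields. Since they are parallel,
\begin{align*}
[V,JV]=\nabla_V JV-\nabla_{JV}V-T(V,JV)=-T(V,JV).
\end{align*}
So we need $T(V,JV,X)=0$ for all $X\in\mathcal{H}$. I would get this from \eqref{Liederiv}. Using $\mathcal{L}_VJ=0$ together with $\nabla V=0$ and $\nabla J=0$, the computation in the proof of \autoref{SKTProjectable} gives
\begin{align*}
0=g((\mathcal{L}_VJ)Y,Z)=-T(V,JY,Z)-T(V,Y,JZ).
\end{align*}
Thus $V\intprod T$ is of type $(1,1)$, and likewise $JV\intprod T$.

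Now put $Y=V$ and $Z=X\in\mathcal{H}$. This yields $T(V,JV,X)=-T(V,V,JX)=0$, which is the required vanishing. Then \autoref{submthm} gives the locally defined Riemannian submersion $\pi$ with totally geodesic fibres tangent to $\mathcal{V}$, the projected torsion $\check T$, and the relation $\check\nabla_XY=\pi_*(\nabla_{\overline X}\overline Y)$.

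\textbf{Step 3: the complex structure on $N$.} Step 2 already shows that $(X\intprod T)|_{\Lambda^2\mathcal{H}}\in\Omega^{1,1}$ for $X=V$ and $X=JV$, hence for all $X\in\mathcal{V}$. \autoref{SKTProjectable} then says that $J$ projects to an integrable $\check J$ with $\check\nabla\check J=0$. Therefore $\check\nabla=\nabla^{g_N}+\frac12\check T$ is a Hermitian connection with skew torsion, so it is the Bismut connection of $(N,g_N,\check J)$. This gives the stated relation.

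\textbf{Main obstacle.} The delicate point is Step 2. The type condition must follow cleanly from \eqref{Liederiv} alone, without also needing $\mathcal{L}_{JV}J=0$ in a subtle way. If the direct substitution $Y=V$ runs into sign issues, I would fall back on computing $[V,JV]$ from $\mathcal{L}_VJ=0$: this gives $[V,JV]=J[V,V]=0$ directly, so $T(V,JV)=0$.
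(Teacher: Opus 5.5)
Your proof is correct. Its overall architecture is the paper's: the holonomy splitting comes from the parallel fields $V,JV$, then \autoref{submthm} applies, then \autoref{SKTProjectable}. The difference is in how you verify the torsion condition \eqref{projecttau}.

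The paper first computes $\mathrm{d}V^\flat=V\intprod T$ for the parallel field $V$. It then invokes \cite[Prop 2.3]{FinosHKT} to get $JV\intprod V\intprod T=JV\intprod\mathrm{d}V^\flat=0$, and uses \eqref{Liederiv} only afterwards, to project $J$.

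You instead extract the vanishing from $\mathcal{L}_VJ=0$ itself:
\begin{itemize}
\item Since $V$ and $JV$ are $\nabla$-parallel, $[V,JV]=-T(V,JV)$.
\item $\mathcal{L}_VJ=0$ gives $[V,JV]=J[V,V]=0$. Equivalently, set $Y=V$ in $T(V,JY,Z)+T(V,Y,JZ)=0$. Your worry about signs is unfounded, because $T(V,V,\cdot)=0$ regardless of sign conventions.
\item Because $\dim\mathcal{V}=2$, we have $\Lambda^3\mathcal{V}=0$, and the $\Lambda^2\mathcal{V}\wedge\mathcal{H}$-component of $T$ is exactly $T(V,JV,\cdot)|_{\mathcal{H}}$. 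So $T(V,JV)=0$ is all that \eqref{projecttau} requires.
\end{itemize}

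Your route is more self-contained. It needs only the inputs already quoted from \cite{Rigidity}, parallelism and \eqref{Liederiv}. The same identity also verifies the $(1,1)$-hypothesis of \autoref{SKTProjectable} explicitly for $V$ and $JV$, and by pointwise linearity for all of $\mathcal{V}$. It also makes transparent that integrability of $\mathcal{V}$ is just the commutation $[V,JV]=0$.

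The paper's route instead obtains \eqref{projecttau} from the parallelism of $V$ plus the cited structural fact, independently of \eqref{Liederiv}. The Lie-derivative condition then enters only for projecting $J$.

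One small point: state explicitly that BHE manifolds are SKT by definition, so the closed-torsion hypothesis of \autoref{submthm} is satisfied.
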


\begin{proof}
As the two vector fields $V$ and $JV$ are parallel under the Bismut connection we may consider the decomposition $TM=\mathcal{V}\oplus\mathcal{H}$ where $\mathcal{V}=\langle V, JV\rangle$. Furthermore,
\begin{align*}
\mathrm{d}V^\flat=\sum_{i=1}^ne_i\wedge\nabla^g_{e_i}V^\flat=\sum_{i=1}^ne_i\wedge\big(\nabla_{e_i}V^\flat-T(e_i,V,\cdot)\big)=V\intprod T.
\end{align*}
Hence, by \cite[Prop 2.3]{FinosHKT} $JV\intprod V\intprod  T=JV\intprod \mathrm{d}V^\flat=0$. This implies that $T\in \Lambda^2\mathcal{H}\wedge V\oplus \Lambda^3\mathcal{H}$ and we can employ \autoref{submthm} to obtain a locally defined Riemannian submersion $\pi\colon M\to N$ along the distribution $\langle V,JV\rangle$.

Then \eqref{Liederiv} shows that the complex structure $J$ is constant along the fibers and hence projectable. As seen in the proof of \autoref{SKTProjectable} this implies that we may furnish the base $N$ with an integrable complex structure whose Bismut connection $\check\nabla$ is the projection of the Bismut connection of the total space.
\end{proof}

In the case of sHKT, the parallel vector fields $V$ associated to $I_\alpha$, $\alpha=1,2,3$, coincide, \cite{FinosHKT}. In other words there exits a vector field $V$ such that $V,I_1V,I_2V,I_3V$ are parallel vector fields for $\nabla^B$. They show the following statement in dimension $4m=8$:

\begin{theorem}[\cite{FinosHKT}]
Let $(M^8,g,I_1,I_2,I_3)$ be an $8$-dimensional sHKT manifold that is not hyperkähler. Then the distribution spanned by $V,$ $I_1V,$ $I_2V,$ $I_3V$ is integrable and satisfies
\begin{align}\label{conical}
\mathcal{L}_V\omega_i=0,\qquad \mathcal{L}_{I_i V}\omega_i=0,\qquad \mathcal{L}_{I_i V}\omega_j=-\omega_k,
\end{align}
where $(ijk)$ is an even permutation of $(123)$.
\end{theorem}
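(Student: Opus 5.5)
We want to prove: for an $8$-dimensional sHKT manifold that is not hyperkähler, the distribution $\mathcal{V}=\langle V,I_1V,I_2V,I_3V\rangle$ is integrable and satisfies \eqref{conical}. The natural route is through \autoref{submthm}: establish \eqref{projecttau} for $\mathcal{V}$, which is equivalent to integrability by \autoref{RemIntegrable}, and then compute Lie derivatives using the Bismut-parallelism of $V$ and of the $I_i$.

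\textbf{Setup.} By \cite{FinosHKT} (the BHE result applied to each $I_\alpha$), there is a vector field $V$ such that $V,I_1V,I_2V,I_3V$ are all $\nabla^B$-parallel. Since they are orthogonal with common nonzero length, $\mathcal{V}$ is a rank-$4$ parallel subbundle. Its orthogonal complement $\mathcal{H}$ is then also parallel, and $\mathcal{H}$ is $I_i$-invariant for every $i$.

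\textbf{Integrability.} As in the proof of \autoref{thmBHE}, we have $\mathrm{d}V^\flat=V\intprod T$, and similarly $\mathrm{d}(I_iV)^\flat=(I_iV)\intprod T$. Applying \cite[Prop 2.3]{FinosHKT} to each complex structure gives $I_iV\intprod V\intprod T=0$, so the components $T(V,I_iV,\cdot)$ vanish. It remains to kill the components $T(I_iV,I_jV,X)$ with $X\in\mathcal{H}$. For these we use the HKT identity $T=-\mathrm{d}^c_{I_i}\omega_i$, which says that $T$ is of type $(2,1)+(1,2)$ with respect to each $I_i$. Equivalently,
\[
T(X,Y,Z)=T(I_iX,I_iY,Z)+T(I_iX,Y,I_iZ)+T(X,I_iY,I_iZ).
\]
Taking $I_k$ and inserting $V,I_iV,X$ relates $T(I_iV,I_jV,X)$ to terms of the form $T(V,\cdot,\cdot)$ paired with $I_kV$. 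These vanish by the previous step, since $I_kI_iV=\pm I_jV$. This yields $T\in\Lambda^3\mathcal{H}\oplus\Lambda^2\mathcal{H}\wedge\mathcal{V}\oplus\Lambda^3\mathcal{V}$, and integrability follows from \autoref{RemIntegrable}.

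\textbf{Lie derivatives.} We use the formula from the proof of \autoref{SKTProjectable}, applied to a parallel vertical field $X$ and to $J=I_i$:
\[
g((\mathcal{L}_XI_i)Y,Z)=-T(X,I_iY,Z)-T(X,Y,I_iZ).
\]
The covariant terms drop out because $X$ is parallel. So $\mathcal{L}_XI_i=0$ precisely when $X\intprod T$ is $I_i$-$(1,1)$, and $\mathcal{L}_X\omega_i$ is then computed from $\mathcal{L}_Xg$ and $\mathcal{L}_XI_i$. Note that $\mathcal{L}_Xg=0$ on $\mathcal{H}$ by the Riemannian submersion property, and $\mathcal{L}_Xg=0$ overall since $X$ is Killing: it is $\nabla$-parallel with skew torsion.

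The main step, and the expected obstacle, is identifying the type of the $2$-forms $V\intprod T$ and $I_iV\intprod T$ restricted to $\Lambda^2\mathcal{H}$ with respect to each $I_j$. In dimension $8$, $\mathcal{H}$ is $4$-dimensional, so $\Lambda^2\mathcal{H}=\Lambda^+\oplus\Lambda^-$ is small. I would first show that $V\intprod T|_{\Lambda^2\mathcal{H}}$ is anti-self-dual, hence $(1,1)$ for all $I_i$; this uses the fact that $\mathrm{d}V^\flat=V\intprod T$ is $(1,1)$ for each $I_i$, from \cite{FinosHKT}, since $V$ is holomorphic Killing. This gives $\mathcal{L}_V\omega_i=0$. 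Next, using the relation $I_iV\intprod T=I_i$-twist of $V\intprod T$, obtained from the type-$(2,1)+(1,2)$ identity above, I would show that $I_iV\intprod T|_{\mathcal{H}}$ equals $V\intprod T$ plus a multiple of $\omega_i|_{\mathcal{H}}$, with the constant fixed by the normalisation of $V$. A self-dual component proportional to $\omega_i$ commutes with $I_i$ and anticommutes with $I_j$, so it produces $\mathcal{L}_{I_iV}\omega_i=0$ and $\mathcal{L}_{I_iV}\omega_j=-\omega_k$. The constant is where I expect the difficulty, together with checking the vertical block $\Lambda^3\mathcal{V}$, where the relation should reduce to the $\mathfrak{sp}(1)$ bracket among the $I_iV$.
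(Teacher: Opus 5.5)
The paper gives no proof of this statement; it only quotes it from \cite{FinosHKT}. Judged on its own, your proposal has two genuine gaps.

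\textbf{Integrability.} The type identity does not do what you claim. Take $(ijk)$ cyclic, so $I_kI_i=I_j$, insert $(V,I_iV,X)$ and use $I_k$:
\[
T(V,I_iV,X)=T(I_kV,I_jV,X)+T(I_kV,I_iV,I_kX)+T(V,I_jV,I_kX).
\]
Only the first and last terms vanish by \cite[Prop 2.3]{FinosHKT}. What remains is $T(I_jV,I_kV,X)=T(I_kV,I_iV,I_kX)$, which links two unknown mixed components to each other.

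Write $\beta_i(X)=T(I_jV,I_kV,X)$. Every pointwise constraint you use reduces to equivalent forms of $\beta_1=\beta_2\circ I_3$, $\beta_2=\beta_3\circ I_1$, $\beta_3=\beta_1\circ I_2$. These constraints are the $(2,1)+(1,2)$ identity for each $I_l$, the vanishing of $T(V,I_lV,\cdot)$, and the holomorphy of $V$ and $I_lV$. The system is solved by $\beta_3=\beta_1\circ I_2$, $\beta_2=-\beta_1\circ I_3$ for every $\beta_1\in\mathcal{H}^*$.

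This is not an artefact. Take a product of two Hopf surfaces $S^3\times S^1$ with bi-invariant metrics of different sizes; it is Bismut-flat with $\mathrm{d}T=0$. Its Lee field satisfies all your inputs, yet $[I_iV,I_jV]$ has a nonzero $\mathcal{H}$-component.

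So a differential input is needed. For example, apply the inverted Bianchi identity of \cite{IvanovStan}, as used in part c) of the Proposition in Section~\ref{Chp2}, to two parallel fields $W_1,W_2$ with $\mathrm{d}T=0$. It gives $(\nabla_XT)(\cdot,W_1,W_2)=0$, so $[W_1,W_2]=-T(W_1,W_2)$ is parallel. Its $\mathcal{H}$-part is then a parallel section of $\mathcal{H}\cong\mathbb{H}$. There the holonomy acts through the $\mathrm{Sp}(1)$ commuting with the $I_l$, and this group fixes no nonzero vector unless it is trivial. This settles the non-flat case; the flat case must be treated separately.

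\textbf{The conical identities.} The identities $\mathcal{L}_V\omega_i=0$ and $\mathcal{L}_{I_iV}\omega_i=0$ follow from your formula once you use that $V$ and $I_iV$ are $I_i$-holomorphic on all of $TM$, not only on $\Lambda^2\mathcal{H}$.

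For the third identity, the type identity only gives $(I_iV\intprod T)|_{\Lambda^2\mathcal{H}}\in\Lambda^-\mathcal{H}\oplus\mathbb{R}\,\omega_i|_{\mathcal{H}}$, where $\Lambda^-\mathcal{H}$ denotes the forms of type $(1,1)$ for all $I_l$. Your claim that it equals $V\intprod T$ plus a multiple of $\omega_i$ does not follow, though it is also not needed.

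Consequently $\mathcal{L}_{I_iV}\omega_j$ equals $c_{\mathcal{H}}\,\omega_k$ on $\Lambda^2\mathcal{H}$ and $c_{\mathcal{V}}\,\omega_k$ on $\Lambda^2\mathcal{V}$. Here $c_{\mathcal{H}}$ is the $\omega_i$-coefficient above, and $c_{\mathcal{V}}$ is fixed by the single number $T(I_1V,I_2V,I_3V)$. Rescaling $V$ multiplies both by the same factor, so no normalisation of $V$ can fix ``the constant''. You need an identity $c_{\mathcal{H}}=c_{\mathcal{V}}\neq0$, and nothing in your argument provides it. Once $\mathcal{V}$ is integrable, the relations $[\mathcal{L}_{I_iV},\mathcal{L}_{I_jV}]=\mathcal{L}_{[I_iV,I_jV]}$ essentially force $c_{\mathcal{H}}\in\{0,c_{\mathcal{V}}\}$, so what must be shown is $c_{\mathcal{H}}\neq0$.

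This fails without further hypotheses. Take $(S^3\times S^1)\times K3$, or $(S^3\times S^1)\times T^4$, with the product HKT structure. It is compact, $8$-dimensional, strong HKT and not hyperk\"ahler, and for $K3$ even non-flat. Its Lee field is tangent to the Hopf factor and $\mathcal{V}$ is integrable. However, $\mathcal{L}_{I_iV}\omega_j$ has no component along the second factor while $\omega_k$ does, so $c_{\mathcal{H}}=0$.

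So the statement as transcribed needs an extra hypothesis from \cite{FinosHKT} that excludes such local products. Any proof has to use that hypothesis exactly at the step you left open.
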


In particular, this shows that $(M^8,g,I_1,I_2,I_3)$ is conical in the sense of \cite{CortesHasegawa} with Euler vector field $2 V$. Indeed, \eqref{conical} shows by the argument in \cite[Lemma 2.2]{PPSHypercomplex} that $\nabla^OV=\frac 12 \operatorname{Id}$, where the Obata connection $\nabla^O$ is the unique torsion-free connection preserving the hyperhermitian structure $(g,I_1,I_2,I_3)$, see \cite{Obata}. Note that we may argue with the fundamental forms as $V,I_1,I_2,I_3$ are parallel with respect to the Bismut connection and thereby Killing. Conversely, given a conical sHKT structure with Euler vector field $2V$ the authors in \cite{CortesHasegawa} show that \eqref{conical} is satisfied. Moreover, it is shown that the distribution spanned by $V, I_1V, I_2V, I_3V$ is integrable.

\begin{lemma}\label{anticyclic}
The conditions in \eqref{conical} for all cyclic permuation $(ijk)$ of $(123)$ imply $\mathcal{L}_{I_j V}\omega_i=\omega_k$.
\end{lemma}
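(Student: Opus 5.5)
The plan is to derive the claim from the Lie bracket relations among the vertical vector fields. The key identity is
\begin{align*}
\mathcal{L}_{[A,B]}=[\mathcal{L}_A,\mathcal{L}_B],
\end{align*}
applied to $A=I_iV$, $B=I_jV$, and the goal is to show that $[I_iV,I_jV]$ is a multiple of $I_kV$. First I would compute these brackets from the data: the vector fields $V,I_1V,I_2V,I_3V$ are $\nabla^B$-parallel, so
\begin{align*}
[I_iV,I_jV]=-T(I_iV,I_jV).
\end{align*}
Using $\mathcal{L}_{I_iV}\omega_j=-\omega_k$ together with the facts that $I_iV$ is Killing and that $\mathcal{L}_{I_iV}(I_jV)=(\mathcal{L}_{I_iV}I_j)V+I_j[I_iV,V]$, one should get $\mathcal{L}_{I_iV}I_j=-I_k$. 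Moreover, $[I_iV,V]=\mathcal{L}_{I_iV}V$ should be zero: $V$ is dual to a form, and $V$ is preserved because $\mathcal{L}_V\omega_i=0$ and $\mathcal{L}_{I_iV}\omega_i=0$. These give the bracket relations of a $\mathfrak{u}(1)\oplus\mathfrak{su}(2)$-type structure. I expect $[I_iV,I_jV]=cI_kV$ for some constant $c$; the conical normalisation $\nabla^OV=\tfrac12\operatorname{Id}$ should fix $c=\pm 2$ or similar.

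A cleaner route avoids computing $c$ directly. Use the fact, from the \eqref{conical} relations, that $\mathcal{L}_{I_iV}$ acts on $\mathrm{span}\{\omega_1,\omega_2,\omega_3\}$ by a $3\times 3$ matrix $A_i$. Writing out \eqref{conical} for every cyclic $(ijk)$ gives
\begin{align*}
A_i\omega_i=0,\qquad A_i\omega_j=-\omega_k,
\end{align*}
while the remaining entry $A_i\omega_k=\mathcal{L}_{I_iV}\omega_k$ is exactly the unknown quantity in the statement. Since each $I_iV$ is Killing and preserves the hyperhermitian data up to rotation, each $A_i$ must preserve the pointwise inner product on this span, because the $\omega$'s are orthonormal up to a constant. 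Hence $A_i$ is skew-symmetric. Skew-symmetry together with $A_i\omega_j=-\omega_k$ forces $A_i\omega_k=\omega_j$. Relabelling, this reads $\mathcal{L}_{I_jV}\omega_i=\omega_k$ for $(ijk)$ cyclic, which is the claim.

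The main obstacle is justifying that $\mathcal{L}_{I_iV}\omega_k$ lies in the span of the $\omega$'s and that $A_i$ is skew. To get this I would apply $\mathcal{L}_{I_iV}$ to the algebraic quaternionic relations, for instance $\omega_j(X,Y)=g(I_jX,Y)$ together with $I_k=I_iI_j$. Since $g$ is preserved, this turns into
\begin{align*}
\mathcal{L}_{I_iV}I_k=(\mathcal{L}_{I_iV}I_i)I_j+I_i(\mathcal{L}_{I_iV}I_j)=0+I_i(-I_k)=I_j,
\end{align*}
which gives $\mathcal{L}_{I_iV}\omega_k=\omega_j$ directly. The only inputs are $\mathcal{L}_{I_iV}g=0$ (Killing) and the translation of the forms into endomorphisms. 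This direct Leibniz computation is the argument I would actually write.
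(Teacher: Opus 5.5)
Your final Leibniz computation is correct and is essentially the paper's proof. The paper likewise uses that $I_kV$ is Killing and that $\mathcal{L}_{I_kV}\omega_k=0$ to get $\mathcal{L}_{I_kV}I_k=0$; it then differentiates $\omega_j(X,Y)=-\omega_i(X,I_kY)$ along $I_kV$ and inserts $\mathcal{L}_{I_kV}\omega_i=-\omega_j$ from \eqref{conical}, which is your relation $I_k=I_iI_j$ written on the $2$-forms with the indices cyclically relabelled. The bracket and skew-symmetry routes you sketch beforehand are unnecessary, and some of their claims (such as $[I_iV,V]=0$) are never justified, so drop them from the written proof.
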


\begin{proof}
As $\mathcal{L}_{I_k V}\omega_k=0$ we can commute $\mathcal{L}_{I_k V}$ with $I_k$ and, hence,
\begin{align*}
(\mathcal{L}_{I_k V}\omega_j)(X,Y)&=(I_k V)(\omega_j(X,Y))-\omega_j(\mathcal{L}_{I_k V}X,Y)-\omega_j(X,\mathcal{L}_{I_k V}Y)\\
&=-(I_k V)(\omega_i(X,I_kY))+\omega_i(\mathcal{L}_{I_k V}X,I_kY)+\omega_i(X,I_k\mathcal{L}_{I_k V}Y)\\
&=-(I_k V)(\omega_i(X,I_kY))+\omega_i(\mathcal{L}_{I_k V}X,I_kY)+\omega_i(X,\mathcal{L}_{I_k V}I_kY)\\
&=-(\mathcal{L}_{I_k V}\omega_i)(X,I_kY)=\omega_j(X,I_kY)=\omega_i(X,Y)\qedhere
\end{align*}
\end{proof}

\begin{theorem}\label{thmsHKT}
Let $(M,g,I_1,I_2,I_3)$ be a non-hyperkähler conical sHKT manifold. Then there exists a locally defined Riemannian submersion $\pi\colon M\to N$ with totally geodesic leaves spanned by $\{V,IV,JV,KV\}$. The base $N$ admits a quaternionic structure $\check{\Omega}$ and a quaternionic connection $\check\nabla$ with skew-symmetric torsion.
\end{theorem}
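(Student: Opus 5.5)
We apply \autoref{submthm} with $\mathcal{V}=\langle V,I_1V,I_2V,I_3V\rangle$ and $\mathcal{H}=\mathcal{V}^\perp$.

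\emph{Holonomy splitting and the torsion condition.} Since $V,I_1V,I_2V,I_3V$ are parallel for the Bismut connection $\nabla$ (the common connection of the three structures, as recalled from \cite{FinosHKT}), $\mathcal{V}$ is pointwise fixed by $\mathrm{Hol}(\nabla)$, and so is its orthogonal complement $\mathcal{H}$. It remains to verify \eqref{projecttau}. By \autoref{RemIntegrable} this is equivalent to the integrability of $\mathcal{V}$. For conical sHKT structures, integrability is exactly the result of \cite{CortesHasegawa} quoted above. Alternatively, one can argue directly as in the proof of \autoref{thmBHE}: $\mathrm{d}(I_\alpha V)^\flat=(I_\alpha V)\intprod T$, and \cite[Prop 2.3]{FinosHKT} applied to each complex structure kills $T(V,I_\alpha V,\cdot)$. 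We also need the mixed terms $T(I_\alpha V,I_\beta V,\cdot)$ to vanish on $\mathcal{H}$; these follow from integrability. \autoref{submthm} then yields the locally defined Riemannian submersion $\pi\colon M\to N$ with totally geodesic fibres tangent to $\mathcal{V}$. It also yields the connection $\check\nabla=\nabla^{g_N}+\frac12\check T$ with skew torsion, satisfying \eqref{pinabla}.

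\emph{Quaternionic structure on $N$.} The individual $\omega_i$ restricted to $\mathcal{H}$ do not project, since by \eqref{conical} and \autoref{anticyclic} we have $\mathcal{L}_{I_iV}\omega_j=\pm\omega_k$. Following the remark on $3$-$(\alpha,\delta)$-Sasaki manifolds, we take instead
\begin{align*}
\Omega=\sum_{i=1}^3\omega_i^{\mathcal{H}}\wedge\omega_i^{\mathcal{H}},\qquad \omega_i^{\mathcal{H}}=\omega_i|_{\Lambda^2\mathcal{H}}.
\end{align*}
$\mathcal{H}$ is $I_i$-invariant, since $\mathcal{V}$ is quaternionic. Hence $\Omega$ vanishes whenever a vertical vector is inserted, and by the Remark after \autoref{holthm} it suffices to check $\mathcal{L}_X\Omega=0$ for $X\in\mathcal{V}$.

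First, $\mathcal{L}_X$ preserves $\mathcal{V}$ because $\mathcal{V}$ is integrable, and $\mathcal{L}_Xg^{\mathcal{H}}=0$ by the Riemannian submersion property. Therefore $\mathcal{L}_X\omega_i^{\mathcal{H}}=(\mathcal{L}_X\omega_i)^{\mathcal{H}}$ up to terms that vanish on $\mathcal{H}$. For $X=V$ all three Lie derivatives vanish. For $X=I_iV$ we get
\begin{align*}
\mathcal{L}_X\Omega=2\big(\omega_j\wedge(-\omega_k)+\omega_k\wedge\omega_j\big)^{\mathcal{H}}=0,
\end{align*}
using \eqref{conical} together with \autoref{anticyclic}. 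This is the infinitesimal $\mathfrak{so}(3)$-rotation of the triple, under which $\sum\omega_i\wedge\omega_i$ is invariant. So $\Omega$ projects to a $4$-form $\check\Omega$. At each point $\check\Omega$ is the standard quaternionic $4$-form, since $\pi_*$ is an isometry $\mathcal{H}\to TN$ intertwining the triples pointwise.

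\emph{Holonomy of $\check\nabla$.} $\nabla$ preserves each $\omega_i$ and preserves the splitting, so $\nabla\Omega=0$ and $\mathrm{Hol}_0(\nabla)\subset\mathrm{Stab}(\Omega)$. \autoref{holthm} then gives $\mathrm{Hol}_0(\check\nabla)\subset\mathrm{Stab}(\check\Omega)=\mathrm{Sp}(m-1)\mathrm{Sp}(1)$. For $m-1\ge2$ this stabiliser is exactly $\mathrm{Sp}(m-1)\mathrm{Sp}(1)$, so $\check\nabla$ is quaternionic. The low-dimensional case $\dim N=4$ must be handled by interpreting a quaternionic structure there as the rank-3 bundle spanned by the projections of the $\omega_i^{\mathcal{H}}$, which is well defined by the same Lie derivative computation.

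The main obstacle is the projectability of $\Omega$. It requires care with the Lie derivatives of the restrictions $\omega_i^{\mathcal{H}}$, in particular showing that the vertical components produced by $\mathcal{L}_X$ do not contribute. The key input is \autoref{anticyclic}, which provides the correct sign needed for the cancellation.
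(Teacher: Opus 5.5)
Your proposal is correct and follows the paper's proof: integrability of $\mathcal{V}$ for conical sHKT structures together with \autoref{RemIntegrable} lets you invoke \autoref{submthm}, the form $\Omega=\sum_i\omega_i\wedge\omega_i$ projects by \eqref{conical} and \autoref{anticyclic}, and \autoref{holthm} applied with $\nabla\Omega=0$ gives the holonomy reduction on the base. The differences are cosmetic. You build $\Omega$ from the horizontal parts $\omega_i^{\mathcal{H}}$ and check the Lie derivative only on the frame $V,I_iV$; this suffices because $W\intprod\Omega=0$ gives $\mathcal{L}_{fW}\Omega=f\,\mathcal{L}_W\Omega$. The paper instead computes $\mathcal{L}_W\Omega$ directly for $W=\sum_\mu f_\mu I_\mu V$.
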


\begin{proof}
As seen above the $4$-dimensional distribution $\mathcal{V}=\langle V,IV,JV,KV\rangle$ is a trivial representation of the holonomy. By \autoref{RemIntegrable} vanishing of the $\Lambda^2\mathcal{V}\otimes\mathcal{H}$-part of the torsion in \autoref{submthm} is equivalent to $\mathcal{V}$ being integrable, which we have seen for conical sHKT structures. We obtain a locally defined Riemannian submersion $\pi\colon M\to N$. Now consider the quaternionic $4$-form $\Omega=\omega_1\wedge\omega_1+\omega_2\wedge\omega_2+\omega_3\wedge\omega_3\in \Lambda^4T^*M$. As the $\omega_i$ are parallel under the Bismut connection so is $\Omega$. Now consider $\mathcal{L}_W\Omega$ for $W=\sum_{k=0}^3f_kI_kV\in\mathcal{V}$, where we set $I_0=\mathrm{id}$ and $f_i\in C^\infty(M)$. We have for $X,Y\in\mathcal{H}$ and $(ijk)$ an even permutation of $(123)$
\begin{align*}
(\mathcal{L}_W\omega_j)(X,Y)&=W(\omega_j(X,Y))-\omega_j(\mathcal{L}_WX,Y)-\omega_j(X,\mathcal{L}_WY)\\
&=\sum_{\mu=0}^3\big((f_\mu \mathcal{L}_{I_\mu V}\omega_j)(X,Y)+\omega_j(\mathrm{d}f_\mu (X)I_\mu V,Y)\\
&\qquad\qquad+\omega_j(X,\mathrm{d}f_\mu(Y)I_\mu V)\big)\\
&=\sum_{\mu=1}^3f_\mu(\mathcal{L}_{I_\mu V}\omega_j)(X,Y)=(f_i\omega_k-f_k\omega_i)(X,Y)
\end{align*}
where we have used that $\omega_j(\mathcal{V},\mathcal{H})=0$, the identities \eqref{conical}, and \autoref{anticyclic}. Thus, restricted to $\Lambda^4\mathcal{H}$
\begin{align*}
\mathcal{L}_W\Omega&=\sum_{j=1}^3\mathcal{L}_W(\omega_j\wedge\omega_j)=2\sum_{j=1}^3(\mathcal{L}_W\omega_j)\wedge\omega_j=2\cyclic{ijk}(f_i\omega_k-f_k\omega_i)\wedge\omega_j\\
&=2\cyclic{ijk}f_i(\omega_k\wedge\omega_j-\omega_j\wedge\omega_k)=0.
\end{align*}
As the $\omega_i$ are non-vanishing only on $\Lambda^2\mathcal{V}\oplus\Lambda^2\mathcal{H}$ $\Omega$ projects to a $4$-form $\check{\Omega}\in\Lambda^3T^*N$. As the stabilizer of such a $4$-form is $\mathrm{Stab}(\check\Omega)=\mathrm{Sp}(m-1)\mathrm{Sp}(1)$ we have that the holonomy $\mathrm{Hol}_0(\check{\nabla})\subset \mathrm{Sp}(m-1)\mathrm{Sp}(1)$ by \autoref{holthm}.
\end{proof}

\begin{rem}
In \cite{KSstring} the authors investigate several further geometries akin to \autoref{thmsHKT} and \autoref{thmBHE}. In their setup they independently arrive to similar conclusions as \autoref{submthm}, see \cite[Lemma 2.2]{KSstring}.
\end{rem}

\section{Submersions on SKT Samelson Spaces}

In this chapter we consider SKT structures on semi-simple Lie groups. These were first found on $\mathrm{SO}(9)$ in \cite{FGSKT} and by \cite{Pham} on $G_2$. We follow the general construction and notation in \cite{LauMon}.

Let $G$ be a simply connected, compact, semi-simple Lie group of even rank. Set the Lie algebra of $G$ as $\mathfrak{g}=\mathfrak{t}\oplus\mathfrak{q}$ with $\mathfrak{t}$ its Cartan subalgebra. Further consider the root space decomposition 
\begin{align*}
\mathfrak{g}^{\mathbb{C}}=\mathfrak{t}^{\mathbb{C}}\oplus\sum_{\alpha\in\Delta^+}(\mathfrak{g}_\alpha\oplus\mathfrak{g}_{-\alpha}),
\end{align*}
where $\Delta^+$ is a choice of positive roots. For any $\alpha\in\Delta^+$ we may choose $E_{\pm\alpha}\in \mathfrak{g}_{\pm\alpha}$ and $H_\alpha\in \mathfrak{t}^{\mathbb{C}}$ such that
\begin{gather*}
[E_\alpha,E_{-\alpha}]=H_\alpha,\quad B(E_\alpha,E_{-\alpha})=1,\\
[H,E_\alpha]=\alpha(H)E_\alpha, \quad B(H,H_\alpha)=\alpha(H)
\end{gather*}
for all $H\in\mathfrak{t}^{\mathbb{C}}$ and where $B(X,Y)=\mathrm{tr}(\mathrm{ad}(X)\circ\mathrm{ad}(Y))$ is the Killing form.

We consider left-invariant complex structure $J\colon \mathfrak{g}\to\mathfrak{g}$ due to Samelson given as $J=J|_\mathfrak{t}+J|_\mathfrak{q}$ where $J|_\mathfrak{t}$ is given by any complex structure on $\mathfrak{t}\cong \mathbb R^{\mathrm{rk}G}$ and $J|_\mathfrak{q}$ is defined as
\begin{align*}
JE_{\pm\alpha}=\pm iE_{\pm\alpha}
\end{align*}
for $\alpha \in \Delta^+$. Note that the defined $J|_\mathfrak{q}$ restricts to a complex structure on the compact real root spaces $\mathfrak{g}^\mathbb{R}_\alpha=(\mathfrak{g}_\alpha\oplus\mathfrak{g}_{-\alpha})\cap\mathfrak{g}$. Up to biholomorphism this does not depend on the choice of maximal torus and positive roots $\Delta^+$. In fact, the biholomorphism class of $(G,J)$ is uniquely determined by the choice of $J|_{\mathfrak{t}}\colon \mathfrak{t}\to\mathfrak{t}$ up to the action of a finite subgroup of $\mathrm{Aut}(G)$ \cite{Pittie}.

Let $\mathfrak{g}=\mathfrak{g}_1\oplus\cdots\oplus\mathfrak{g}_s$ be the decomposition of $\mathfrak{g}$ into its simple factors and $\mathfrak{t}_i$, $\Delta_i^+$ their corresponding maximal tori and positive roots.
We consider the left $G$- and right $T$-invariant metrics on $\mathfrak{g}$ given by
\begin{align*}
g=-\sum_{i=1}^s\lambda_i\left(B|_{\mathfrak{t}_i\times\mathfrak{t}_i}+\sum_{\alpha\in \Delta_i^+}c_\alpha B|_{\mathfrak{g}_\alpha^\mathbb{R}\times\mathfrak{g}_\alpha^\mathbb{R}}\right),
\end{align*}
for constants $\lambda_i,c_\alpha>0$. For each $J$-indecomposable component of $G$ and thereby $\mathfrak{t}$ there is up to uniform scaling a unique set of $\lambda_i$ such that $g$ is compatible with $J$. In \cite{LauMon} the authors show that all pluriclosed Hermitian metrics that are left $G$- and $\mathrm{Ad}(T)$-invariant are obtained this way. We will assume $g$ to be $J$-compatible in the following.

On reductive homogeneous manifolds $G/H$ invariant connections are in bijective correspondence with $\mathrm{Ad}(H)$-invariant linear maps $\Lambda\colon \mathfrak{m}\times\mathfrak{m}\to \mathfrak{m}$, compare \cite[Chapter 10]{KobNom2}. For a recent treatment see \cite[Part III]{ANT}. In our case $H=\{e\}$ and any metric connection with skew-torsion $T^B$ can be characterized via the corresponding $(0,3)$-tensor $g(\Lambda_XY,Z)\in \mathfrak{g}^*\otimes\Lambda^2\mathfrak{g}^*\subset\otimes^3\mathfrak{g}^*$, 
\begin{align*}
g(\Lambda_XY,Z)&=g(\Lambda^{g_N}_XY,Z)+\frac 12 T(X,Y,Z)\\
&=\frac 12 \left(g([Z,X],Y)+g([Z,Y],X)+g([X,Y],Z)\right)+\frac 12 T(X,Y,Z).
\end{align*}

Further, the correspondence identifies the holonomy algebra as the subalgebra of $\End(\mathfrak{g})$ generated by 
\begin{align}\label{holonomy_generators}
\Lambda_{X_1}\Lambda_{X_2}\cdots\Lambda_{X_k}R(X,Y)
\end{align}
for any $X_1,\dots,X_k,X,Y\in\mathfrak{g}$ and where $R(X,Y)=[\Lambda_X,\Lambda_Y]-\Lambda_{[X,Y]}$ is the curvature operator.

\begin{lemma}\label{TorusSplit}
The maximal torus $\mathfrak{t}$ is a trivial representation of the Bismut holonomy,
\begin{align*}
\mathfrak{g}=\mathfrak{t}\oplus \left(\sum_{\alpha\in\Delta^+}\mathfrak{g}^\mathbb{R}_\alpha\right).
\end{align*}
Furthermore, the decomposition $\mathfrak{g}=\mathfrak{g}_1\oplus\cdots\oplus\mathfrak{g}_s$ of $\mathfrak{g}$ into simple Lie algebras is preserved under the holonomy.
\end{lemma}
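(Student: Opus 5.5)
We will show that every element of $\mathfrak{t}$ is $\nabla$-parallel, i.e.\ $\Lambda_X H=0$ for all $X\in\mathfrak{g}$ and $H\in\mathfrak{t}$, where $\nabla$ is the Bismut connection. By \eqref{holonomy_generators} every holonomy generator has the form $\Lambda_{X_1}\cdots\Lambda_{X_k}R(X,Y)$ with $R(X,Y)=[\Lambda_X,\Lambda_Y]-\Lambda_{[X,Y]}$. If all $\Lambda_X$ annihilate $\mathfrak{t}$, then so do all these generators. Each $\Lambda_X$ is skew with respect to $g$, so each $\Lambda_X$ then preserves $\mathfrak{t}^\perp=\mathfrak{q}=\sum_\alpha\mathfrak{g}_\alpha^\mathbb{R}$. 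This gives the stated splitting, with $\mathfrak{t}$ a trivial representation.

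\emph{Step 1: compute the torsion.} For the left-invariant forms we have $\mathrm{d}\omega(X,Y,Z)=-\cyclic{X,Y,Z}\omega([X,Y],Z)$, so
\[
T(X,Y,Z)=-\cyclic{X,Y,Z}g([JX,JY],Z).
\]
We decompose the brackets into root space components. The relevant relations are $[\mathfrak{t},\mathfrak{t}]=0$, $[\mathfrak{t},\mathfrak{g}_\alpha]\subset\mathfrak{g}_\alpha$, $[\mathfrak{g}_\alpha,\mathfrak{g}_{-\alpha}]\subset\mathfrak{t}^\mathbb{C}$, and $[\mathfrak{g}_\alpha,\mathfrak{g}_\beta]\subset\mathfrak{g}_{\alpha+\beta}$. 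The metric is block diagonal: it is $-\lambda_i B$ on $\mathfrak{t}_i$ and $-\lambda_i c_\alpha B$ on $\mathfrak{g}^\mathbb{R}_\alpha$.

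\emph{Step 2: show $g(\Lambda_XH,Z)=0$ for $H\in\mathfrak{t}$, using trilinearity in $(X,Z)$ by types.}
\begin{enumerate}[(i)]
\item $X,Z\in\mathfrak{t}$: every term vanishes because $\mathfrak{t}$ is abelian and $J\mathfrak{t}=\mathfrak{t}$.
\item Exactly one of $X,Z$ lies in a root space and the other in $\mathfrak{t}$: the terms land in different orthogonal root spaces, so they vanish.
\item $X\in\mathfrak{g}^\mathbb{R}_\alpha$, $Z\in\mathfrak{g}^\mathbb{R}_\beta$ with $\beta\neq\alpha$: the terms vanish by root orthogonality.
\item $X,Z\in\mathfrak{g}^\mathbb{R}_\alpha$: this is the real computation. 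Here $g([Z,X],H)$ and $g([X,H],Z)$ appear in the Levi-Civita part, and the torsion contributes terms with $[JX,JZ]$ and $[JH,X]$.
\end{enumerate}
Case (iv) is best done over $\mathbb{C}$ with $X=E_\alpha$, $Z=E_{-\alpha}$ (and the pairs $E_\alpha,E_\alpha$, which vanish by the weight of $H$). Using $JE_{\pm\alpha}=\pm iE_{\pm\alpha}$, $[E_\alpha,E_{-\alpha}]=H_\alpha$ and $[H,E_\alpha]=\alpha(H)E_\alpha$, the Levi-Civita terms produce $\alpha(H)$ weighted by $\lambda_i$ and $\lambda_i c_\alpha$. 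The torsion terms produce $\alpha(JH)$ and $g(H_\alpha,JH)$-type terms with the same weights.

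The \emph{main obstacle} is this cancellation. I expect that $J$-compatibility of $g$ on $\mathfrak{t}$ (which fixes the $\lambda_i$), together with $\mathrm{ad}(H)$ being skew and commuting with $J|_{\mathfrak{q}}$, gives the exact cancellation $g(\Lambda_{E_\alpha}H,E_{-\alpha})=0$. Essentially, $\nabla H=\nabla^gH+\tfrac12T(\cdot,H)$, where $\nabla^g H$ restricted to $\mathfrak{g}^\mathbb{R}_\alpha$ is a multiple of $\mathrm{ad}(H)$ and $T(\cdot,H)$ is $-\mathrm{ad}(JH)$-like. The identity needed is then $\tfrac12\bigl((1-c_\alpha)+(1+c_\alpha)\bigr)\mathrm{ad}$-terms combining via $J$, and if a residual term $\mathrm{ad}(H)$ versus $J\,\mathrm{ad}(JH)$ remains, it cancels because $J\,\mathrm{ad}(JH)$ and $\mathrm{ad}(H)$ agree up to sign on each $\mathfrak{g}^\mathbb{R}_\alpha$ precisely when the pluriclosed/compatibility relation holds. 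I would carry out this one computation carefully, since the sign conventions are delicate.

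\emph{Step 3: the simple factors.} Take $X\in\mathfrak{g}_i$ and $Y\in\mathfrak{g}_j$ with $i\neq j$. Then $[X,Y]=0$, and $\mathfrak{g}_i\perp\mathfrak{g}_j$ with respect to $g$. Since $J$ preserves $\mathfrak{t}_i$ only up to $J$-indecomposable blocks, we instead use the root decomposition: $J$ preserves each $\mathfrak{q}_i$, and $\mathfrak{t}$ is already parallel. By the bracket formulas, $\Lambda_X$ maps $\mathfrak{q}_j$ into $\mathfrak{q}_j\oplus\mathfrak{t}$ for $X$ in any factor. Because $\mathfrak{t}$ is parallel and $\Lambda_X$ is skew, $\Lambda_X$ preserves each $\mathfrak{q}_j$. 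Hence the holonomy preserves each $\mathfrak{t}_j\oplus\mathfrak{q}_j=\mathfrak{g}_j$.
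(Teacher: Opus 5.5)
Your reduction matches the paper's. It suffices to show $\Lambda_XH=0$ for $H\in\mathfrak{t}$, and cases (i)--(iii) vanish by root-space orthogonality. Your Step~3 is also correct: you get $\Lambda_X\mathfrak{q}_j\subset\mathfrak{q}_j\oplus\mathfrak{t}$, then use skewness of $\Lambda_X$ together with $\Lambda_X\mathfrak{t}=0$.

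The gap is case (iv). It is the only place where anything has to be proved, and you leave it as an expectation. The mechanism you sketch is also not what happens:
\begin{itemize}
\item Neither $c_\alpha$, nor the $\lambda_i$, nor the pluriclosed relation enters.
\item Comparing $\mathrm{ad}(H)$ with $J\,\mathrm{ad}(JH)$ leads nowhere. On $\mathfrak{g}^{\mathbb{R}}_\alpha$ the former is a real multiple of $J$ and the latter a real multiple of the identity, so they never agree up to sign unless both vanish.
\item The torsion does not contribute $g(H_\alpha,JH)$-type terms.
\end{itemize}

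What you are missing is the following cancellation in \eqref{Lambda} with $X=E_\alpha$, $Y=H$, $Z=E_{-\alpha}$. The two Levi-Civita terms involving $\mathrm{ad}(H)$ cancel each other:
\[
g([E_\alpha,H],E_{-\alpha})-g([H,E_{-\alpha}],E_\alpha)=-\alpha(H)g(E_\alpha,E_{-\alpha})+\alpha(H)g(E_{-\alpha},E_\alpha)=0.
\]
The two torsion terms involving $JH$ also cancel each other, since $J=\pm i$ on $\mathfrak{g}_{\pm\alpha}$:
\[
-g([JE_\alpha,JH],E_{-\alpha})-g([JH,JE_{-\alpha}],E_\alpha)=i\alpha(JH)g(E_\alpha,E_{-\alpha})-i\alpha(JH)g(E_{-\alpha},E_\alpha)=0.
\]
What survives is $g([E_{-\alpha},E_\alpha],H)-g([JE_{-\alpha},JE_\alpha],H)$. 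This vanishes because $[JE_{-\alpha},JE_\alpha]=[-iE_{-\alpha},iE_\alpha]=[E_{-\alpha},E_\alpha]$. In real terms, $J$ is a rotation of the plane $\mathfrak{g}^{\mathbb{R}}_\alpha$, so $[JZ,JX]=[Z,X]$ there, and the remaining torsion term exactly cancels the remaining Levi-Civita term. The case $X=E_{-\alpha}$, $Z=E_\alpha$ is the same computation with the signs of $\pm i$ swapped. With this identity supplied, the rest of your argument goes through.
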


\begin{proof}\label{Proof:tHol}
The Bismut connection has torsion
\begin{align}\begin{split}\label{LieBismutTorsion}
T(X,Y,Z)&=-\mathrm{d}^c\omega(X,Y,Z)=\mathrm{d}\omega(JX,JY,JZ)\\
&=\omega([JX,JY],JZ)+\omega([JY,JZ],JX)+\omega([JZ,JX],JY)\\
&=-g([JX,JY],Z)-g([JY,JZ],X)-g([JZ,JX],Y).
\end{split}\end{align}
and, thus, corresponds to the Nomizu operator
\begin{align}
\begin{split}\label{Lambda}g(\Lambda_XY,Z)=\frac 12 (&g([X,Y],Z)+g([Z,X],Y)-g([Y,Z],X)\\
&-g([JX,JY],Z)-g([JZ,JX],Y)-g([JY,JZ],X))\end{split}
\end{align}
Observe that by \eqref{holonomy_generators} if $\Lambda_X$ preserves a given subspace for all $X\in\mathfrak{g}$ so does the holonomy. We show the splitting on the complexified $\mathfrak{g}^\mathbb{C}$ and $\Lambda$. If all three vectors $X,Y,Z\in\mathfrak{t}^\mathbb{C}$ then so are $JX,JY,JZ$. Hence, all commutators in \eqref{Lambda} vanish and so does $\Lambda$. Now suppose any two vectors of $X,Y,Z$ are in $\mathfrak{t}^\mathbb{C}$. Then commutators of these two vanish directly. Commutators of these with a vector in some root space $\mathfrak{g}_\alpha$ map into $\mathfrak{g}_\alpha$ and, thus, pair trivially with any vector not in $\mathfrak{g}_{-\alpha}$. It remains to investigate the case where only $Y\in \mathfrak{t}^\mathbb{C}$. By previous consideration the only possible non-trivial components of $\Lambda$ can be obtained if $X\in \mathfrak{g}_\alpha$ is in some root space and $Z\in \mathfrak{g}_{-\alpha}$. We separate two cases: For $\alpha\in \Delta^+$ 
\begin{align*}
g(\Lambda_XY,Z)&=\frac 12 (g([X,Y],Z)+g([Z,X],Y)-g([Y,Z],X)\\
&\qquad-g([iX,JY],Z)-g([-iZ,iX],Y)-g([JY,-iZ],X))\\
&=\frac 12 (-\alpha(Y)g(X,Z)+g([Z,X],Y)+\alpha(Y)g(Z,X)\\
&\qquad+i\alpha(JY)g(X,Z)-g([Z,X],Y)-i\alpha(JY)g(Z,X))=0
\end{align*}
and for $-\alpha\in \Delta^+$
\begin{align*}
g(\Lambda_XY,Z)&=\frac 12 (g([X,Y],Z)+g([Z,X],Y)-g([Y,Z],X)\\
&\qquad-g([-iX,JY],Z)-g([iZ,-iX],Y)-g([JY,iZ],X))\\
&=\frac 12 (\alpha(Y)g(X,Z)+g([Z,X],Y)-\alpha(Y)g(Z,X)\\
&\qquad+i\alpha(JY)g(X,Z)-g([Z,X],Y)-i\alpha(JY)g(Z,X))\\
&=0.
\end{align*}
To proof the second assertion we consider $Y\in\mathfrak{g}_i$. If $Y\in\mathfrak{t}$ then we have just shown that $Y$ is parallel. Hence, let $Y\in\mathfrak{g}_\alpha$ for some root in $\Delta_i$. By skew-symmetry we may assume $Z\in \mathfrak{g}_\beta$ for some root $\beta\in \Delta$. Note that $J$ preserves $\mathfrak{g}_\alpha$ and $\mathfrak{g}_\beta$. If $\alpha+\beta=0$, $X\in \mathfrak{g}_{-\alpha}\subset \mathfrak{g}_i$ as well. If $\alpha+\beta\neq 0$ the complex structure $J$ preserves $\mathfrak{g}_{-\alpha-\beta}$ and all commutators in \eqref{Lambda} vanish unless $X\in \mathfrak{g}_{-\alpha-\beta}$. That implies $\alpha+\beta\in \Delta$ and, thus, $\beta\in \Delta_i$ is in the same simple component.
\end{proof}

We now want to restrict to homogeneous SKT structures. They were described in \cite{LauMon} in the following theorem.

\begin{theorem}[\cite{LauMon}]\label{Thm:Pluriclosed}
The Hermitian manifold $(G,J,g)$ is pluriclosed if and only if $c_{\alpha+\beta}=c_\alpha+c_\beta-1$ for all $\alpha,\beta\in \Delta^+$ such that $\alpha+\beta\in\Delta$.

Equivalently, for any positive root $\alpha=\sum_{j=1}^{\mathrm{rk}G}k_j\alpha_j$ we have
\begin{align*}
c_\alpha=1+\sum_{j=1}^{\mathrm{rk}G}k_j(c_{\alpha_j}-1).
\end{align*}
where $c_{\alpha_j}\in \R$, for $\alpha_j$ a simple root, are free parameters so long as all $c_\alpha>0$.
\end{theorem}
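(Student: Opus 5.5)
We need to prove the Laurent–Montedoro characterization: $(G,J,g)$ is pluriclosed if and only if $c_{\alpha+\beta}=c_\alpha+c_\beta-1$ whenever $\alpha,\beta,\alpha+\beta\in\Delta^+$. The plan is to compute $\mathrm{d}T$ directly on the Lie algebra, complexified and evaluated on root vectors.

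First we express $T$ using \eqref{LieBismutTorsion}. Set $\omega=g(J\cdot,\cdot)$. Since everything is left-invariant, the differential of a left-invariant $3$-form is
\begin{align*}
\mathrm{d}T(X_0,\dots,X_3)=\sum_{i<j}(-1)^{i+j}T([X_i,X_j],X_0,\dots,\hat X_i,\dots,\hat X_j,\dots,X_3).
\end{align*}
We complexify and evaluate on quadruples drawn from $\mathfrak{t}^\mathbb{C}$ and root spaces $\mathfrak{g}_\gamma$.

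The metric satisfies $g(E_\gamma,E_{-\gamma})=-\lambda c_\gamma$, with $c_{-\gamma}=c_\gamma$, and pairs $\mathfrak{g}_\gamma$ only with $\mathfrak{g}_{-\gamma}$. $J$ acts on $E_\gamma$ by $\pm i$ according to the sign of $\gamma$. Inserting these into \eqref{LieBismutTorsion}, $T(E_\alpha,E_\beta,E_\gamma)$ is nonzero only when $\alpha+\beta+\gamma=0$. In that case it equals $N_{\alpha\beta}$ times a linear combination of $c_\alpha,c_\beta,c_\gamma$ whose coefficients are determined by the signs of $\alpha,\beta,\gamma$.

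By $T$-weight invariance, $\mathrm{d}T$ can be nonzero only on quadruples of total weight zero. Torus entries are essentially trivial: by Lemma~\ref{TorusSplit} the torus components drop out, and $\mathrm{d}T(H,\cdot,\cdot,\cdot)$ reduces to weight terms that sum to zero. The essential check is therefore on four root vectors $E_{\alpha},E_{\beta},E_{\gamma},E_{\delta}$ with $\alpha+\beta+\gamma+\delta=0$, together with the cases involving pairs $\pm\alpha,\pm\beta$.

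The main obstacle is organizing this computation. Each term in $\mathrm{d}T$ is a product of two structure constants $N$ times a linear expression in the $c$'s. We would choose Chevalley constants and use the Jacobi identity, equivalently the relation $N_{\alpha\beta}N_{\gamma\delta}+N_{\beta\gamma}N_{\alpha\delta}+N_{\gamma\alpha}N_{\beta\delta}=0$, to rewrite the total. The expected outcome is a sum of terms of the form $N\cdot N\cdot(c_{\alpha+\beta}-c_\alpha-c_\beta+1)$.

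The term for a single triple is isolated by the quadruple $(E_\alpha,E_\beta,E_{-\alpha},E_{-\beta})$. For this quadruple the brackets $[E_\alpha,E_{-\alpha}]=H_\alpha$ contribute torus terms involving $\langle\alpha,\beta\rangle$, which supply the constant $1$. The bracket $[E_\alpha,E_\beta]$ contributes $|N_{\alpha\beta}|^2$ times a combination of $c$'s.

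\emph{Necessity.} Evaluating on such a quadruple should give $\mathrm{d}T=\kappa\,(c_{\alpha+\beta}-c_\alpha-c_\beta+1)$ with $\kappa\neq0$. We would verify this explicitly in the rank-two case, for instance $\mathfrak{su}(3)$, to pin down signs.

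\emph{Sufficiency.} Given the relation, each general four-root evaluation decomposes, via the Jacobi-type relations, into combinations of these defects, and hence vanishes.

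\emph{Equivalent form.} This is an induction on the height of $\alpha$. Every non-simple positive root can be written as $\beta+\alpha_j$ with $\beta$ a positive root and $\alpha_j$ simple. Hence $c_\alpha-1$ is additive in the root coordinates. Well-definedness follows because $c_\alpha-1$ is the linear functional determined by the values $c_{\alpha_j}-1$. Conversely, any choice of values $c_{\alpha_j}$ defines such a linear functional, and the additive relation then holds automatically.
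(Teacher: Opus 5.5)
The paper does not prove this statement; it is quoted from Lauret--Montedoro. I am therefore judging your sketch on its own. Evaluating $\dd T$ on weight vectors is a workable strategy, and your last paragraph on the equivalence of the two formulations is fine. There is, however, a genuine gap.

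\textbf{Necessity.} Both directions of the main equivalence are only announced (``should give'', ``expected outcome''), and the necessity step as you state it is false in general. The quadruple $(E_\alpha,E_\beta,E_{-\alpha},E_{-\beta})$ involves not only $[E_\alpha,E_\beta]$, $[E_{-\alpha},E_{-\beta}]$ and the torus brackets. It also involves $[E_\alpha,E_{-\beta}]$ and $[E_\beta,E_{-\alpha}]$, which are nonzero whenever $\alpha-\beta\in\Delta$. In non-simply-laced types this occurs together with $\alpha+\beta\in\Delta$, e.g.\ in $\mathfrak{so}(5)$ with $\alpha=e_1$, $\beta=e_2$. Write $D_{\alpha\beta}=c_{\alpha+\beta}-c_\alpha-c_\beta+1$ and let $\lambda$ be the scale of the simple factor. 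Up to overall sign conventions one finds
\begin{align*}
\dd T(E_\alpha,E_\beta,E_{-\alpha},E_{-\beta})=2\lambda\Big(B([E_\alpha,E_\beta],[E_{-\alpha},E_{-\beta}])\,D_{\alpha\beta}+B([E_\alpha,E_{-\beta}],[E_\beta,E_{-\alpha}])\,D'\Big),
\end{align*}
where $D'=D_{\alpha-\beta,\beta}$ or $D_{\beta-\alpha,\alpha}$ according to which of $\pm(\alpha-\beta)$ is positive. The second term is absent if $\alpha-\beta\notin\Delta$.

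So this quadruple does not isolate a single defect. Your $\mathfrak{su}(3)$ test is simply laced and cannot detect this, and one example cannot prove $\kappa\neq 0$ in general anyway. The repair is an induction on the height of $\alpha+\beta$:
\begin{itemize}
\item $D'$ belongs to the sum root $\alpha$ or $\beta$, which has smaller height.
\item In the base case $\alpha,\beta$ are simple, so $\alpha-\beta\notin\Delta$.
\item The first coefficient never vanishes, because $[\mathfrak g_{\pm\alpha},\mathfrak g_{\pm\beta}]=\mathfrak g_{\pm(\alpha+\beta)}$ and $B$ pairs $\mathfrak g_{\alpha+\beta}$ nondegenerately with $\mathfrak g_{-\alpha-\beta}$.
\end{itemize}

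\textbf{Sufficiency.} The missing idea is to compare $T$ itself, not $\dd T$, with the bi-invariant case, rather than bookkeeping Jacobi identities. From \eqref{LieBismutTorsion} one gets, for $\alpha,\beta\in\Delta^+$ and $H\in\mathfrak t^{\mathbb C}$,
\begin{align*}
T(E_\alpha,E_\beta,E_{-\alpha-\beta})=\lambda\,B([E_\alpha,E_\beta],E_{-\alpha-\beta})\,(1-D_{\alpha\beta}),\qquad T(H,E_\alpha,E_{-\alpha})=\lambda\,\alpha(H).
\end{align*}
The same holds with all roots negated, and all other components vanish by weight or because $\mathfrak t$ is abelian. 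In particular the torus components do not depend on the $c$'s at all.

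Hence $T=\Theta+T_1$, where:
\begin{itemize}
\item $\Theta(X,Y,Z)=\sum_i\lambda_iB([X_i,Y_i],Z_i)$ is the bi-invariant Cartan $3$-form (subscripts are $\mathfrak g_i$-components), which is closed;
\item $T_1$ is linear in the defects and vanishes as soon as one argument lies in $\mathfrak t^{\mathbb C}$.
\end{itemize}
Sufficiency is then immediate. Torus arguments in $\dd T=\dd T_1$ drop out trivially; this, not Lemma~\ref{TorusSplit} (which concerns holonomy), is the reason. The displayed quadruple formula above is just $\dd T_1$ evaluated on these components.
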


Consider the set $I_{\mathrm{max}}$ of simple roots $\alpha_i$ such that $c_{\alpha_i}=1$. Then by construction for all roots $\alpha$ in the span of $I$
\begin{align*}
\Delta_{I_{\mathrm{max}}}\coloneqq \left\{\sum_{\alpha_i\in I_{\mathrm{max}}} k_i\alpha_i\,|\,k_i\in\mathbb{N}\right\},
\end{align*}
we also have $c_{\alpha}=1$.

\begin{theorem}\label{SamelsonHolonomy}
Let $(G,J,g)$ be a pluriclosed Hermitian structure as above. Then $\mathfrak{g}$ splits as a representation of the Bismut holonomy as
\begin{align*}
\mathfrak{g}=\mathfrak{t}\oplus\bigoplus_{\alpha\in \Delta_{I_{\mathrm{max}}}}\mathfrak{g}_{\alpha}^\mathbb{R}\oplus \left(\sum_{\alpha\in\Delta^+\setminus\Delta_{I_{\mathrm{max}}}}\mathfrak{g}_\alpha^\mathbb{R}\right).
\end{align*}
\end{theorem}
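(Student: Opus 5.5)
We compute the Nomizu operator $\Lambda$ from \eqref{Lambda} on root vectors, with the metric in the form of \autoref{Thm:Pluriclosed}. Then we apply the criterion from the proof of \autoref{TorusSplit}: if every $\Lambda_X$ preserves a subspace, then by \eqref{holonomy_generators} so does the holonomy algebra. \autoref{TorusSplit} already makes $\mathfrak{t}$ trivial and lets us work within one simple factor. So it suffices to prove two facts. First, for every $X\in\mathfrak{g}$ the operator $\Lambda_X$ preserves each $\mathfrak{g}^\mathbb{R}_\alpha$ with $\alpha\in\Delta_{I_{\mathrm{max}}}$ (in particular it kills the mixing terms). Second, it preserves the orthogonal complement $\sum_{\alpha\notin\Delta_{I_{\mathrm{max}}}}\mathfrak{g}^\mathbb{R}_\alpha$, which follows automatically since $\Lambda_X$ is skew.

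Work over $\mathbb{C}$ with $X\in\mathfrak{g}_\gamma$, $Y\in\mathfrak{g}_\alpha$, $Z\in\mathfrak{g}_\beta$ and $\alpha+\beta+\gamma=0$, since otherwise all brackets pair trivially. Write $g$ on $\mathfrak{g}_{\pm\alpha}$ as $-\lambda c_\alpha B$ and $J=\epsilon_\alpha i$ with $\epsilon_\alpha=\pm1$ according to whether $\alpha$ is positive. Then each term of \eqref{Lambda} equals $B([X,Y],Z)$ times a scalar, using the invariance $B([X,Y],Z)=B([Y,Z],X)=B([Z,X],Y)$. This gives
\begin{align*}
g(\Lambda_XY,Z)=-\tfrac{\lambda}{2}B([X,Y],Z)\big(c_\beta+c_\alpha-c_\gamma+\epsilon_\gamma\epsilon_\alpha c_\beta+\epsilon_\beta\epsilon_\gamma c_\alpha-\epsilon_\alpha\epsilon_\beta c_\gamma\big),
\end{align*}
where $c_{-\alpha}=c_\alpha$. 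We then split into sign cases. Up to symmetry exactly one of $\alpha,\beta,\gamma$ has a sign opposite to the other two; say $-\beta=\alpha+\gamma$ with $\alpha,\gamma$ positive. The coefficient then becomes a linear combination of the form $c_\alpha+c_\gamma-c_{\alpha+\gamma}$ or $c_\alpha-c_\gamma$-type terms. Using $c_{\alpha+\gamma}=c_\alpha+c_\gamma-1$ from \autoref{Thm:Pluriclosed}, I expect each case to reduce to a multiple of $(c_\rho-1)$ for one of the roots $\rho\in\{\alpha,\beta,\gamma\}$. The natural guess is that it reduces to the root in the $Y$ or $Z$ slot when the other two have equal sign, reflecting that the bi-invariant case $c\equiv1$ is Bismut-flat.

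Granting this, take $Y\in\mathfrak{g}_{\alpha}$ with $\alpha\in\pm\Delta_{I_{\mathrm{max}}}$ and $Z\in\mathfrak{g}_\beta$ with $\beta$ any other root not equal to $\pm\alpha$. The component $g(\Lambda_XY,Z)$ must vanish. Two cases arise. If the surviving factor is $c_\alpha-1$, it vanishes since $c_\alpha=1$ on $\Delta_{I_{\mathrm{max}}}$. Otherwise the factor is $c_\beta-1$ or $c_\gamma-1$, with $\gamma=-\alpha-\beta$. Here I need to show that a root $\beta$ with $\alpha+\beta$ a root and $c_\beta=1$ or $c_{\alpha+\beta}=1$ forces the relevant combination to vanish. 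This uses the positivity of the simple-root coefficients: since $c_{\alpha_j}-1$ is not assumed to have a sign, I would need care. Still, $c_\alpha$ is affine in the root coefficients, so $c_{\alpha+\beta}-1=(c_\alpha-1)+(c_\beta-1)=c_\beta-1$. Hence any factor involving $\beta$ or $\gamma$ equals the same number $c_\beta-1$, and the mixing coefficient must combine these into a vanishing sum.

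The main obstacle is exactly this sign-case bookkeeping. One must verify that, when $c_\alpha=1$, the coefficient above cancels identically in every sign configuration, not merely in those where $\alpha$ is the ``middle'' root. I would first check the configuration with $\alpha,\beta$ positive and $\gamma=-\alpha-\beta$; there the terms read $c_\beta+c_\alpha-c_\gamma+\ldots$, and with $c_\gamma=c_\alpha+c_\beta-1$ and $c_\alpha=1$ the coefficient should collapse to $0$. The other configurations follow by the skew symmetry of $\Lambda_X$ and by conjugation $\alpha\mapsto-\alpha$. Finally, each $\mathfrak{g}^\mathbb{R}_\alpha$ for $\alpha\in\Delta_{I_{\mathrm{max}}}$ is separately invariant, not just their sum. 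The reason is that $\Lambda_X$ maps $\mathfrak{g}_\alpha$ into $\mathfrak{g}_{\alpha+\gamma}$, and for $\gamma\ne0$ this lands outside $\mathfrak{g}_{\pm\alpha}$, so it is killed by the cancellation above.
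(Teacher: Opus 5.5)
Your strategy is the paper's: evaluate $g(\Lambda_XY,Z)$ on root vectors with $\alpha+\beta+\gamma=0$, split by signs, and use $c_{\alpha+\beta}=c_\alpha+c_\beta-1$. However, the displayed coefficient is wrong, and the decisive case is not proved.

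The term $-g([JY,JZ],X)=\epsilon_\alpha\epsilon_\beta\,g([Y,Z],X)$ contributes $+\epsilon_\alpha\epsilon_\beta c_\gamma$, so the bracket should be
\[
(1+\epsilon_\alpha\epsilon_\gamma)c_\beta+(1+\epsilon_\beta\epsilon_\gamma)c_\alpha-(1-\epsilon_\alpha\epsilon_\beta)c_\gamma .
\]
With your sign the bracket equals $-2c_\gamma$, $2c_\beta$ or $2c_\alpha$ in the three sign configurations, so it never vanishes. Your planned check with $\alpha,\beta>0$ would already fail (it gives $-2c_\gamma\neq 0$). Call the ``odd'' root the one whose sign differs from the other two. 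The correct bracket then gives:
\begin{itemize}
\item $0$ identically if $\gamma$ (the $X$-slot) is odd, with no use of pluriclosedness;
\item $2(c_\beta-c_\gamma)=2(c_\alpha-1)=0$ if $\beta$ is odd;
\item $2(c_\alpha-c_\gamma)=2(c_\beta-1)$ if $\alpha$ is odd.
\end{itemize}

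This last case is the gap: you need $c_\beta=1$, and your argument does not give it. The additivity $c_{\alpha+\beta}-1=(c_\alpha-1)+(c_\beta-1)$ holds only for roots of the same sign. Here $\alpha$ and $\beta$ have opposite signs, and with $c_\alpha=1$ one gets $c_{\alpha+\beta}-1=-(c_\beta-1)$ instead. Moreover, the bracket is a single multiple of $c_\beta-1$, so there is no ``vanishing sum'' to appeal to.

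The missing idea, which is the final step of the paper's proof, is combinatorial. If $\alpha\in\Delta_{I_{\mathrm{max}}}$ is positive and $\beta,\gamma$ are negative, then $\alpha=(-\beta)+(-\gamma)$ is a sum of two positive roots. Positive roots have nonnegative coefficients in the simple roots, so $-\beta$ and $-\gamma$ are supported in $I_{\mathrm{max}}$. Hence they lie in $\Delta_{I_{\mathrm{max}}}$, and $c_\beta=1$.

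Note also that skew-symmetry of $\Lambda_X$ cannot reduce the configurations to one. It swaps the $Y$- and $Z$-slots, but the hypothesis $\alpha\in\Delta_{I_{\mathrm{max}}}$ sits in one slot. So the three cases (odd root in the $X$-, $Y$- or $Z$-slot) are genuinely different and need the three separate arguments above. Only $\alpha\mapsto-\alpha$ is a true symmetry.
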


\begin{proof}
 Fix $\alpha\in\Delta_{I_{\mathrm{max}}}$. Let $Y\in \mathfrak{g}_{\alpha}$ and $Z\in \mathfrak{g}_\beta$ where $\alpha\pm\beta\neq0$. We consider $g(\Lambda_XY,Z)$ as in \eqref{Lambda}. Observe that $X\in \mathfrak{g}_\gamma$ with $\alpha+\beta+\gamma=0$ or otherwise the commutators map to the wrong root spaces and all summands in $\Lambda$ vanish. Suppose $\beta\in \Delta^+$. Then $-\gamma=\alpha+\beta\in \Delta^+$ and we find
\begin{align*}
g(\Lambda_XY,Z)&=\frac 12 (g([X,Y],Z)+g([Z,X],Y)-g([Y,Z],X)\\
&\qquad-g([-iX,iY],Z)-g([iZ,-iX],Y)-g([iY,iZ],X))=0
\end{align*}
Now assume $-\beta\in \Delta^+$. If $\gamma=-\alpha-\beta\in\Delta^+$ we have
\begin{align*}
g(\Lambda_XY,Z)&=\frac 12 (g([X,Y],Z)+g([Z,X],Y)-g([Y,Z],X)\\
&\qquad-g([iX,iY],Z)-g([-iZ,iX],Y)-g([iY,-iZ],X))\\
&=-c_{-\beta} B([X,Y],Z)+c_{\gamma}B([Y,Z],X)\\
&=-(c_{-\beta}-c_{\gamma})B([X,Y],Z).
\end{align*}
Hence, \autoref{Thm:Pluriclosed} implies
\begin{align*}
c_{-\beta}=c_{\alpha-\alpha-\beta}=c_{\alpha+\gamma}=c_{\alpha}+c_{\gamma}-1
\end{align*}
and $g(\Lambda_XY,Z)$ vanishes if and only if $c_\alpha=1$.
If $-\gamma=\alpha+\beta\in \Delta^+$ we have
\begin{align*}
g(\Lambda_XY,Z)=-(c_\alpha-c_{-\gamma})B([X,Y],Z)
\end{align*}
instead and it vanishes if and only if $c_{-\beta}=1$. However, $-\gamma\in\Delta^+$ implies that $\alpha=-\beta-\gamma$ is the sum of positive roots and, hence, $-\beta,-\gamma\in \Delta_{I_{\mathrm{max}}}$. Therefore, $c_{-\beta}=1$, as desired.

As the complex structure $J$ appears only pairwise in \eqref{Lambda} we obtain the same results when inverting the signs of $\alpha,\beta$ and $\gamma$.
\end{proof}

\begin{rem}
If all simple roots satisfy $c_{\alpha_i}=1$ then the metric $g$ is biinvariant. It is a well known result that $(G,J,g)$ with $g$ biinvariant is Bismut-flat and, hence, the holonomy representation is trivial. In this case the SKT structure is obviously also BHE. By \cite[Theorem 5.1]{Barbaro}, any other metric fails to be BHE showing that the associated submersions below are distinct from the ones in \autoref{thmBHE}.
\end{rem}

\begin{corollary}\label{SamelsonSubmersion}
Let $I\subset I_{\mathrm{max}}$ and $H_I$ the subgroup of $G$ with Lie algebra $\mathfrak{h}_I=\mathfrak{t}\oplus\bigoplus_{\alpha\in \Delta_I}\mathfrak{g}_{\alpha}^\mathbb{R}$. Then the map
\begin{align*}
G\rightarrow G/H_I
\end{align*}
is a Hermitian submersion. The Bismut connection projects to the Bismut connection on $N$.
\end{corollary}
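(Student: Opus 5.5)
The plan is to apply \autoref{submthm} with $\mathcal{V}=\mathfrak{h}_I$, extended left-invariantly, and $\mathcal{H}=\mathfrak{m}_I\coloneqq\sum_{\alpha\in\Delta^+\setminus\Delta_I}\mathfrak{g}^\mathbb{R}_\alpha$. Then identify the local submersion with the global fibration $G\to G/H_I$, and invoke \autoref{SKTProjectable} and \autoref{holthm} for the Hermitian part.

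\emph{Holonomy invariance.} By \autoref{TorusSplit}, $\mathfrak{t}$ is holonomy-invariant. By \autoref{SamelsonHolonomy}, each $\mathfrak{g}^\mathbb{R}_\alpha$ with $\alpha\in\Delta_{I_{\max}}$ is invariant. Since $\Delta_I\subset\Delta_{I_{\max}}$, the sum $\mathfrak{h}_I$ is invariant. Its $g$-orthogonal complement $\mathfrak{m}_I$ is then invariant too, because the metric is $\nabla$-parallel and the root spaces are $g$-orthogonal.

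\emph{Torsion condition and fibration.} I would use \autoref{RemIntegrable}: condition \eqref{projecttau} is equivalent to integrability of $\mathcal{V}$. The left-invariant distribution $\mathfrak{h}_I$ is a Lie subalgebra, since $\Delta_I$ is closed under addition within $\Delta$ and $[\mathfrak{t},\mathfrak{g}_\alpha]\subset\mathfrak{g}_\alpha$. Its integral leaves are the cosets $gH_I$. I need $H_I$ closed; this holds because $H_I$ is the centralizer of a subtorus of $T$, hence a connected closed subgroup. So the local submersion from \autoref{submthm} is the global projection $G\to G/H_I$. The fibers are totally geodesic, and the metric descends because $\mathcal{L}_Vg|_{\mathcal{H}}=0$. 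For left-invariant data this also follows directly from $g|_{\mathfrak{m}_I}$ being $\mathrm{Ad}(H_I)$-invariant: $\mathrm{Ad}(T)$-invariance is built in, and $c_\alpha$ is constant along $\Delta_I$-strings because $c_{\alpha+\beta}=c_\alpha+c_\beta-1=c_\alpha$ for $\beta\in\Delta_I$, by \autoref{Thm:Pluriclosed} with $c_\beta=1$.

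\emph{Complex structure.} Both $\mathcal{H}=\mathfrak{m}_I$ and $\mathcal{V}$ are $J$-invariant. By \autoref{SKTProjectable}, it remains to show $(X\intprod T)|_{\Lambda^2\mathcal{H}}\in\Omega^{1,1}$ for $X\in\mathfrak{h}_I$, i.e.\ $T(X,JY,Z)+T(X,Y,JZ)=0$. For $X\in\mathfrak{t}$ this is the vanishing $T(\mathfrak{t},\cdot,\cdot)$ computed in the proof of \autoref{TorusSplit}. For $X\in\mathfrak{g}_\gamma$ with $\gamma\in\pm\Delta_I$, I would use \eqref{LieBismutTorsion} together with the $(2,0)$ part $T(X,Y,Z)$ with $Y\in\mathfrak{g}_\alpha$, $Z\in\mathfrak{g}_\beta$ and $\alpha,\beta$ of equal sign. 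Here $\gamma=-\alpha-\beta$ has the opposite sign, and the three terms combine with coefficients $c_{-\beta}-c_{\gamma}$-type differences exactly as in the proof of \autoref{SamelsonHolonomy}. These vanish because $c$ is constant on $\Delta_I$-strings. Then $J$ projects to an integrable $\check J$, and $\check\nabla$ is the Bismut connection on $N$ by \autoref{SKTProjectable}. Alternatively, $J|_{\mathfrak{m}_I}$ is $\mathrm{Ad}(H_I)$-invariant, since it is $\pm i$ on $\mathfrak{g}_{\pm\alpha}$ and $\mathrm{ad}(\mathfrak{g}_{\pm\Delta_I})$ preserves positivity of roots outside $\Delta_I$. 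I will choose this alternative if the torsion computation gets messy.

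\emph{Main obstacle.} The delicate point is this type check on the mixed torsion. I expect it to reduce to the same sign bookkeeping as in \autoref{SamelsonHolonomy}. The key input is that adding a root of $\pm\Delta_I$ to a positive root outside $\Delta_I$ keeps it positive and leaves $c$ unchanged.
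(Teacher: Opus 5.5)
The step that fails as written is your primary verification of the $(1,1)$ condition in \autoref{SKTProjectable}. Both of its justifications are wrong.

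First, $T(\mathfrak{t},\cdot,\cdot)$ does not vanish. \autoref{TorusSplit} shows $\Lambda_XH=0$ for $H\in\mathfrak{t}$, i.e.\ $\mathfrak{t}$ is parallel; it does not show $H\intprod T=0$. In \eqref{LieBismutTorsion} the two $J$-terms cancel and $T(H,E_\alpha,E_{-\alpha})=-g(H_\alpha,H)$, a nonzero multiple of $\alpha(H)$.

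Second, the $(2,0)$-type components are not vanishing differences of the $c$'s. Take $\alpha,\beta\in\Delta^+$ with $\alpha+\beta\in\Delta$, and $X\in\mathfrak{g}_{-\alpha-\beta}$, $Y\in\mathfrak{g}_\alpha$, $Z\in\mathfrak{g}_\beta$. Then \eqref{LieBismutTorsion} gives $T(X,Y,Z)=\lambda_i(c_\alpha+c_\beta-c_{\alpha+\beta})B([X,Y],Z)=\lambda_iB([X,Y],Z)\neq0$ by \autoref{Thm:Pluriclosed}. The differences $c_{-\beta}-c_\gamma$ you recall belong to $\Lambda$ in the proof of \autoref{SamelsonHolonomy}, not to $T$.

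What actually makes the check work, and what the paper uses, is that such configurations never have $X\in\mathcal{V}$ and $Y,Z\in\mathcal{H}$. With $\alpha+\beta+\gamma=0$ and $\alpha,\beta$ of the same sign, $\gamma\neq0$, and $-\gamma=\alpha+\beta\in\pm\Delta_I$ forces $\alpha,\beta\in\pm\Delta_I$. This is exactly your own positivity observation. For $X\in\mathfrak{h}_I^\mathbb{C}$ and $Y\in\mathfrak{g}_\alpha\subset\mathfrak{m}_I^\mathbb{C}$, a nonzero $T(X,Y,Z)$ forces $Z$ into a root space of the opposite sign. Hence $(X\intprod T)|_{\Lambda^2\mathcal{H}}$ is of type $(1,1)$, and $T(X,JY,JZ)=T(X,Y,Z)$ is immediate. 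Constancy of $c$ along $\Delta_I$-strings plays no role in this step.

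With that repair, the proof is complete. So is it if you instead use your fallback: $J|_{\mathfrak{m}_I}$ is $\mathrm{Ad}(H_I)$-invariant, hence $J$ projects, and the ``if and only if'' of \autoref{SKTProjectable} gives integrability of $\check J$ and the Bismut property of $\check\nabla$.

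The remaining steps are correct and differ mildly from the paper.
\begin{itemize}
\item You get \eqref{projecttau} from \autoref{RemIntegrable} plus the fact that $\mathfrak{h}_I$ is a subalgebra. The paper instead checks $T(\mathcal{V},\mathcal{V},\mathcal{H})=0$ directly: three weights summing to zero, two of them in $\pm\Delta_I\cup\{0\}$, force the third there too.
\item You make explicit several things the paper leaves implicit: the holonomy invariance of $\mathfrak{h}_I$ (from \autoref{SamelsonHolonomy} and $I\subset I_{\mathrm{max}}$), the closedness of $H_I$ as the centralizer of a subtorus, and hence that the fibration $G\to G/H_I$ is global.
\end{itemize}
Your fallback is the more conceptual route, reading projectability as invariance under the isotropy representation. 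The paper's torsion-type check stays inside the framework of \autoref{SKTProjectable}.
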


\begin{proof}
Let $\mathfrak{h}=\mathfrak{h}_I^\mathbb{C}$ and set $\mathfrak{g}^\mathbb{C}= \mathfrak{h}\oplus\mathfrak{p}$. We prove that the Bismut torsion $T^B(X,Y,Z)=0$ for $X,Y\in \mathcal{V}^\mathbb{C}=\mathfrak{h}$ and $Z\in\mathcal{H}^\mathbb{C}= \mathfrak{p}$. Let $X\in\mathfrak{g}_\alpha$, $Y\in \mathfrak{g}_\beta$ and $Z\in \mathfrak{g}_\gamma$, where we allow $\mathfrak{g}_0\coloneqq \mathfrak{t}^\mathbb{C}$. Observe that $J$ preserves these spaces. Thus, either all terms in \eqref{LieBismutTorsion} vanish individually and $T(X,Y,Z)=0$ or  $\alpha+\beta+\gamma=0$. However, this implies that if $\alpha,\beta\in \pm\Delta_I\cup\{0\}$ so is $-\gamma$. Hence, we obtain a Riemannian submersion as in \autoref{submthm}.

To apply \autoref{SKTProjectable} we further show that $T(X,JY,JZ)=T(X,Y,Z)$ for all $X\in\mathfrak{g}_\alpha \subset\mathfrak{h}$ and $Y\in\mathfrak{g}_\beta\subset\mathfrak{p}$ and $Z\in \mathfrak{g}_\gamma\subset\mathfrak{p}$. As before we may assume $\alpha+\beta+\gamma=0$. Suppose $\beta,\gamma\in\Delta^+$. Then $-\alpha=\beta+\gamma$ is the sum of two positive roots, implying $\beta,\gamma\in \Delta_I$, in contradiction to $Y,Z\in \mathfrak{p}$. The same holds if $-\beta,-\gamma\in\Delta^+$. Hence, we may assume $\beta\in \Delta^+$ and $-\gamma\in \Delta^+$.
By \eqref{LieBismutTorsion} 
\begin{align*}
T(X,JY,JZ)&=-g([JX,Y],JZ)+g([Y,Z],X)-g([Z,JX],JY)\\
&=ig([JX,Y],Z)-g([iY,iZ],X)-g([Z,JX],Y)\\
&=g([JX,JY],Z)+g([JY,JZ],X)+g([JZ,JX],Y)\\
&=T(X,Y,Z).\qedhere
\end{align*}
\end{proof}

\begin{rem}
The quotient spaces $G/H_I$ are exactly the generalized flag manifolds. Indeed, any parabolic subalgebra is given by 
\begin{align*}
\mathfrak{p}_I=\mathfrak{t}^\mathbb{C}\oplus\sum_{\alpha\in\Delta^+}\mathfrak{g}_\alpha\oplus\sum_{\alpha\in\Delta_I}\mathfrak{g}_{-\alpha}
\end{align*}
for some subset $I$ of simple roots as before and hence, $\mathfrak{h}_I=\mathfrak{p}_I\cap\mathfrak{g}$. On the other hand, generalized flag manifolds are homogeneous spaces $G^\mathbb{C}/P_I$ where $P_I$ is a parabolic subgroup with Lie algebra $\mathfrak{p}_I$. The compact real form $G$ acts transitively on $G^\mathbb{C}/P_I$ and we can see that
\begin{align*}
G^\mathbb{C}/P_I=G/P_I\cap G.
\end{align*}
Now $H_I$ is the connected component of $P_I\cap G$ and as generalized flags are simply connected, $G/H_I=G^\mathbb{C}/P_I$.
\end{rem}

\KOMAoption{fontsize}{10pt}\printbibliography

@Article{ADS21,
 Author = {Ilka {Agricola} and Giulia {Dileo} and Leander {Stecker}},
 Title = {{Homogeneous non-degenerate \(3- (\alpha,\delta)\)-Sasaki manifolds and submersions over quaternionic K\"ahler spaces}},
 FJournal = {{Annals of Global Analysis and Geometry}},
 Journal = {{Ann. Global Anal. Geom.}},
 ISSN = {0232-704X},
 Volume = {60},
 Number = {1},
 Pages = {111--141},
 Year = {2021},
 Publisher = {Springer Netherlands, Dordrecht},
 Language = {English},
 DOI = {10.1007/s10455-021-09762-9},
 MSC2010 = {53B05 53C15 53C25 53C26 53D10 53C27 32V05 22E25}
}

@article{ADS23,
 author = {Agricola, Ilka and Dileo, Giulia and Stecker, Leander},
 title = {Curvature properties of {{\(3\)}}-{{\(({{\alpha}} ,{{\delta}})\)}}-{Sasaki} manifolds},
 fjournal = {Annali di Matematica Pura ed Applicata. Serie Quarta},
 journal = {Ann. Mat. Pura Appl. (4)},
 issn = {0373-3114},
 volume = {202},
 number = {5},
 pages = {2007--2033},
 year = {2023},
 language = {English},
 doi = {10.1007/s10231-023-01310-5},
 zbMATH = {7734924},
 Zbl = {1526.53042}
}

@article{AFF,
 author = {Agricola, Ilka and Ferreira, Ana Cristina and Friedrich, Thomas},
 title = {The classification of naturally reductive homogeneous spaces in dimensions {{\(n{{\leq}} 6\)}}},
 fjournal = {Differential Geometry and its Applications},
 journal = {Differ. Geom. Appl.},
 issn = {0926-2245},
 volume = {39},
 pages = {59--92},
 year = {2015},
 language = {English},
 doi = {10.1016/j.difgeo.2014.11.005},
 zbMATH = {6476496},
 Zbl = {1435.53040}
}

@article{AFS,
 author = {Alexandrov, Bogdan and Friedrich, Thomas and Schoemann, Nils},
 title = {Almost {Hermitian} 6-manifolds revisited},
 fjournal = {Journal of Geometry and Physics},
 journal = {J. Geom. Phys.},
 issn = {0393-0440},
 volume = {53},
 number = {1},
 pages = {1--30},
 year = {2005},
 language = {English},
 doi = {10.1016/j.geomphys.2004.04.009},
 zbMATH = {2175236},
 Zbl = {1075.53036}
}

@book{ANT,
    AUTHOR = {Agricola, Ilka and Naujoks, Henrik and Theiß, Marvin},
     TITLE = {Geometry of Principal Fibre Bundles},
	year = {2026},
	pubstate={inpreparation}
}

@misc{Rigidity,
      title={Rigidity results for non-K\"ahler Calabi-Yau geometries on threefolds}, 
      author={Vestislav Apostolov and Giuseppe Barbaro and Kuan-Hui Lee and Jeffrey Streets},
      year={2026},
      eprint={2408.09648},
      archivePrefix={arXiv},
      primaryClass={math.DG},
      url={https://arxiv.org/abs/2408.09648}, 
}

@misc{Barbaro,
      title={Bismut Hermitian Einstein metrics and the stability of the pluriclosed flow}, 
      author={Giuseppe Barbaro},
      year={2024},
      eprint={2307.10207},
      archivePrefix={arXiv},
      primaryClass={math.DG},
      url={https://arxiv.org/abs/2307.10207}, 
}

@misc{BFG_BTP,
 author = {Brienza, Beatrice and Fino, Anna and Grantcharov, Gueo},
 title = {{CYT} and {SKT} manifolds with parallel {Bismut} torsion},
 year = {2024},
 howpublished = {Preprint, {arXiv}:2401.07800 [math.{DG}] (2024)},
 doi = {10.1017/prm.2024.115},
 url = {https://arxiv.org/abs/2401.07800},
 arXiv = {arXiv:2401.07800}
}

@misc{FinosHKT,
 author = {Brienza, Beatrice and Fino, Anna and Grantcharov, Gueo and Verbitsky, Misha},
 title = {On the structure of compact strong {HKT} manifolds},
 year = {2025},
 howpublished = {Preprint, {arXiv}:2505.06058 [math.{DG}] (2025)},
 url = {https://arxiv.org/abs/2505.06058},
 arXiv = {arXiv:2505.06058}
}

@article{CMS,
 author = {Cleyton, Richard and Moroianu, Andrei and Semmelmann, Uwe},
 title = {Metric connections with parallel skew-symmetric torsion},
 fjournal = {Advances in Mathematics},
 journal = {Adv. Math.},
 issn = {0001-8708},
 volume = {378},
 pages = {51},
 note = {Id/No 107519},
 year = {2021},
 language = {English},
 doi = {10.1016/j.aim.2020.107519},
 zbMATH = {7298470},
 Zbl = {1483.53018}
}

@article{CortesHasegawa,
 author = {Cort{\'e}s, Vicente and Hasegawa, Kazuyuki},
 title = {The {H}/{Q}-correspondence and a generalization of the supergravity c-map},
 fjournal = {T{\^o}hoku Mathematical Journal. Second Series},
 journal = {T{\^o}hoku Math. J. (2)},
 issn = {0040-8735},
 volume = {76},
 number = {2},
 pages = {255--292},
 year = {2024},
 language = {English},
 doi = {10.2748/tmj.20221125},
 zbMATH = {7898887},
 Zbl = {1555.53082}
}

@article{FGsurvey,
 author = {Fino, Anna and Grantcharov, Gueo},
 title = {A survey on pluriclosed and {CYT} metrics},
 fjournal = {Serdica Mathematical Journal},
 journal = {Serdica Math. J.},
 issn = {1310-6600},
 volume = {50},
 number = {2},
 pages = {103--124},
 year = {2024},
 language = {English},
 doi = {10.55630/serdica.2024.50.103-124},
 zbMATH = {8019428},
 Zbl = {1561.53071}
}

@article{FGSKT,
 author = {Fino, Anna and Grantcharov, Gueo},
 title = {{CYT} and {SKT} metrics on compact semi-simple {Lie} groups},
 fjournal = {SIGMA. Symmetry, Integrability and Geometry: Methods and Applications},
 journal = {SIGMA, Symmetry Integrability Geom. Methods Appl.},
 issn = {1815-0659},
 volume = {19},
 pages = {paper 028, 15},
 year = {2023},
 language = {English},
 doi = {10.3842/SIGMA.2023.028},
 zbMATH = {7707710},
 Zbl = {1518.53057}
}

@article{FLMM_G2,
 author = {Fino, Anna and Mart{\'{\i}}n-Merch{\'a}n, Luc{\'{\i}}a and Raffero, Alberto},
 title = {The twisted {{\(\mathrm{G}_2\)}} equation for strong {{\(\mathrm{G}_2\)}}-structures with torsion},
 fjournal = {Pure and Applied Mathematics Quarterly},
 journal = {Pure Appl. Math. Q.},
 issn = {1558-8599},
 volume = {20},
 number = {6},
 pages = {2711--2767},
 year = {2024},
 language = {English},
 doi = {10.4310/PAMQ.241112235423},
 zbMATH = {7946813},
 Zbl = {1555.53050}
}

@article{FiVe,
 author = {Fino, Anna and Vezzoni, Luigi},
 title = {Special {Hermitian} metrics on compact solvmanifolds},
 fjournal = {Journal of Geometry and Physics},
 journal = {J. Geom. Phys.},
 issn = {0393-0440},
 volume = {91},
 pages = {40--53},
 year = {2015},
 language = {English},
 doi = {10.1016/j.geomphys.2014.12.010},
 zbMATH = {6440282},
 Zbl = {1318.53049}
}

@article {FrIv,
    AUTHOR = {Friedrich, Thomas and Ivanov, Stefan},
     TITLE = {Parallel spinors and connections with skew-symmetric torsion
              in string theory},
   JOURNAL = {Asian J. Math.},
  FJOURNAL = {Asian Journal of Mathematics},
    VOLUME = {6},
      YEAR = {2002},
    NUMBER = {2},
     PAGES = {303--335},
      ISSN = {1093-6106},
   MRCLASS = {53C27 (53C05 53C25 58J60)},
  MRNUMBER = {1928632},
MRREVIEWER = {Uwe Semmelmann},
       DOI = {10.4310/AJM.2002.v6.n2.a5},
       URL = {https://doi.org/10.4310/AJM.2002.v6.n2.a5},
}

@article{FriG2,
 author = {Friedrich, Thomas},
 title = {{{\(G_2\)}}-manifolds with parallel characteristic torsion},
 fjournal = {Differential Geometry and its Applications},
 journal = {Differ. Geom. Appl.},
 issn = {0926-2245},
 volume = {25},
 number = {6},
 pages = {632--648},
 year = {2007},
 language = {English},
 doi = {10.1016/j.difgeo.2007.06.010},
 zbMATH = {5239606},
 Zbl = {1141.53019}
}

@article{G2heterotic,
 author = {Galdeano, Mateo and Stecker, Leander},
 title = {The heterotic {{\(G_2\)}} system with reducible characteristic holonomy},
 fjournal = {Journal of Geometry and Physics},
 journal = {J. Geom. Phys.},
 issn = {0393-0440},
 volume = {217},
 pages = {33},
 note = {Id/No 105635},
 year = {2025},
 language = {English},
 doi = {10.1016/j.geomphys.2025.105635},
 zbMATH = {8101552}
}

@article{MGF,
 author = {Garcia-Fernandez, Mario and Gonzalez Molina, Raul and Streets, Jeffrey},
 title = {Pluriclosed flow and the {Hull}-{Strominger} system},
 fjournal = {Advances in Mathematics},
 journal = {Adv. Math.},
 issn = {0001-8708},
 volume = {485},
 pages = {95},
 note = {Id/No 110699},
 year = {2026},
 language = {English},
 doi = {10.1016/j.aim.2025.110699},
 zbMATH = {8143051}
}

@article{MGFJS_pluri,
 author = {Garcia-Fernandez, Mario and Jordan, Joshua and Streets, Jeffrey},
 title = {Non-K{\"a}hler {Calabi}-{Yau} geometry and pluriclosed flow},
 fjournal = {Journal de Math{\'e}matiques Pures et Appliqu{\'e}es. Neuvi{\`e}me S{\'e}rie},
 journal = {J. Math. Pures Appl. (9)},
 issn = {0021-7824},
 volume = {177},
 pages = {329--367},
 year = {2023},
 language = {English},
 doi = {10.1016/j.matpur.2023.07.002},
 zbMATH = {7731005}
}

@article{IvanovStan,
 author = {Ivanov, S. and Stanchev, N.},
 title = {The {Riemannian} {Bianchi} identities of metric connections with skew torsion and generalized {Ricci} solitons},
 fjournal = {Results in Mathematics},
 journal = {Result. Math.},
 issn = {1422-6383},
 volume = {79},
 number = {8},
 pages = {20},
 note = {Id/No 270},
 year = {2024},
 language = {English},
 doi = {10.1007/s00025-024-02302-4},
 zbMATH = {7951532},
 Zbl = {1555.53062}
}

@misc{KSstring,
      title={The canonical symmetry reduction of string backgrounds}, 
      author={Aaron Kennon and Jeffrey Streets},
      year={2025},
      eprint={2511.20773},
      archivePrefix={arXiv},
      primaryClass={math.DG},
      url={https://arxiv.org/abs/2511.20773}, 
}

@book {KobNom2,
    AUTHOR = {Kobayashi, Shoshichi and Nomizu, Katsumi},
     TITLE = {Foundations of differential geometry. {V}ol. {II}},
    SERIES = {Wiley Classics Library},
      NOTE = {Reprint of the 1969 original,
              A Wiley-Interscience Publication},
 PUBLISHER = {John Wiley \& Sons, Inc., New York},
      YEAR = {1996},
     PAGES = {xvi+468},
      ISBN = {0-471-15732-5},
   MRCLASS = {53-01},
  MRNUMBER = {1393941},
}

@misc{LauMon,
 author = {Lauret, Jorge and Montedoro, Facundo},
 title = {Pluriclosed metrics on compact semisimple {Lie} groups},
 year = {2025},
 howpublished = {Preprint, {arXiv}:2506.21725 [math.{DG}] (2025)},
 url = {https://arxiv.org/abs/2506.21725},
 arXiv = {arXiv:2506.21725}
}

@misc{MorSch,
 author = {Moroianu, Andrei and Schwahn, Paul},
 title = {Submersion constructions for geometries with parallel skew torsion},
 year = {2024},
 howpublished = {Preprint, {arXiv}:2409.14421 [math.{DG}] (2024)},
 url = {https://arxiv.org/abs/2409.14421},
 arXiv = {arXiv:2409.14421}
}

@article {Obata,
    AUTHOR = {Obata, Morio},
     TITLE = {Affine connections on manifolds with almost complex,
              quaternion or {H}ermitian structure},
   JOURNAL = {Jpn. J. Math.},
  FJOURNAL = {Japanese Journal of Mathematics},
    VOLUME = {26},
      YEAR = {1956},
     PAGES = {43--77},
      ISSN = {0075-3432},
   MRCLASS = {32.00},
  MRNUMBER = {95290},
       DOI = {10.4099/jjm1924.26.0\_43},
       URL = {https://doi.org/10.4099/jjm1924.26.0_43},
}

@article{OVFinoVezConj,
 author = {Ornea, Liviu and Verbitsky, Misha},
 title = {Balanced metrics and {Gauduchon} cone of locally conformally {K{\"a}hler} manifolds},
 fjournal = {IMRN. International Mathematics Research Notices},
 journal = {Int. Math. Res. Not.},
 issn = {1073-7928},
 volume = {2025},
 number = {3},
 pages = {10},
 note = {Id/No rnaf014},
 year = {2025},
 language = {English},
 doi = {10.1093/imrn/rnaf014},
 zbMATH = {8005328},
 Zbl = {1561.53074}
}

@article{PPSHypercomplex,
 author = {Pedersen, H. and Poon, Y. S. and Swann, A. F.},
 title = {Hypercomplex structures associated to quaternionic manifolds},
 fjournal = {Differential Geometry and its Applications},
 journal = {Differ. Geom. Appl.},
 issn = {0926-2245},
 volume = {9},
 number = {3},
 pages = {273--292},
 year = {1998},
 language = {English},
 doi = {10.1016/S0926-2245(98)00026-6},
 zbMATH = {1308136},
 Zbl = {0920.53018}
}

@article{Pittie,
 author = {Pittie, Harsh V.},
 title = {The {Dolbeault}-cohomology ring of a compact, even-dimensional {Lie} group},
 fjournal = {Proceedings of the Indian Academy of Sciences. Mathematical Sciences},
 journal = {Proc. Indian Acad. Sci., Math. Sci.},
 issn = {0253-4142},
 volume = {98},
 number = {2-3},
 pages = {117--152},
 year = {1988},
 language = {English},
 doi = {10.1007/BF02863632},
 zbMATH = {4173932},
 Zbl = {0713.57023}
}

@article{Pham,
	author = {David Pham},
	title = { A family of left-invariant SKT metrics on the exceptional Lie group $G_2$},
	journal = {Acta Mathematica Universitatis Comenianae},
	volume = {94},
	number = {1},
	year = {2025}
}

@article{nKpaper,
 author = {Stecker, Leander},
 title = {Canonical submersions in nearly {K{\"a}hler} geometry},
 fjournal = {Complex Manifolds},
 journal = {Complex Manifolds},
 issn = {2300-7443},
 volume = {12},
 pages = {11},
 note = {Id/No 20250016},
 year = {2025},
 language = {English},
 doi = {10.1515/coma-2025-0016},
 zbMATH = {8111376}
}

@misc{ZZ_BTP,
 author = {Zhao, Quanting and Zheng, Fangyang},
 title = {Curvature characterization of {Hermitian} manifolds with {Bismut} parallel torsion},
 year = {2024},
 howpublished = {Preprint, {arXiv}:2407.10497 [math.{DG}] (2024)},
 url = {https://arxiv.org/abs/2407.10497},
 arXiv = {arXiv:2407.10497}
}

\noindent\textsc{Leander Stecker, Universität Leipzig, Augustusplatz 10, 04109 Leipzig}\\
\textit{E-mail address}: \texttt{leander.stecker@uni-leipzig.de}

\end{document}